\title
[On the Cyclicity of the Unramified Iwasawa modules]
{
On the Cyclicity of the Unramified Iwasawa Modules of the Maximal
Multiple $\mathbb{Z}_p$-Extensions Over Imaginary Quadratic Fields
}
\author{Takashi MIURA, Kazuaki MURAKAMI, Keiji OKANO and Rei OTSUKI}
\date{}
\newtheorem{Example}{Example}
\newtheorem{thm}{Theorem}[section]
\newtheorem{prop}[thm]{Proposition}
\newtheorem{cor}[thm]{Corollary}
\newtheorem{lem}[thm]{Lemma}
\newtheorem{Rem}[thm]{Remark}
\begin{document}
\begin{abstract}
For an odd prime number $p$, we study the number of generators of the unramified Iwasawa modules of the maximal multiple $\mathbb{Z}_p$-extensions over Iwasawa algebra.
In a previous paper of the authors, under several assumptions for an imaginary quadratic field, we obtain a necessary and sufficient condition for the Iwasawa module to be cyclic as a module over the Iwasawa algebla.
Our main result is to give methods for computation and numerical examples about the results.
We remark that our results do not need the assumption that Greenberg's generalized conjecture holds.
\end{abstract}
\maketitle


\section{Introduction}\label{Introduction}
Let $p$ be a prime number, $\mathbb{Z}_p$ the ring of $p$-adic integers, 
$K$ an algebraic number field of finite degree,
and $K_\infty^{\rm c}$ the cyclotomic $\mathbb{Z}_p$-extension of $K$.
One of the most important objects in classical Iwasawa theory is the Galois group $X_{K_\infty^{\rm c}}$ of the maximal unramified abelian pro-$p$ extension of $K_\infty^{\rm c}$.
The Galois group ${\rm Gal}(K_\infty^{\rm c}/K)$ acts on $X_{K_\infty^{\rm c}}$ by the inner product, 
and it is well known that $X_{K_\infty^{\rm c}}$ is a finitely generated torsion $\mathbb{Z}_p[[{\rm Gal}(K_\infty^{\rm c}/K)]]$-module.
We introduce here a well-known case below where we can reduce computing the number of generators $X_{K_\infty}^{\rm c}$
to computing that of the $p$-Sylow subgroup $A_K$ of the ideal class group of $K$.
If $p$ does not split in $K$ and is totally ramified in $K_\infty^{\rm c}/K$, 
then the ${\rm Gal}(K_\infty^{\rm c}/K)$-coinvariant of $X_{K_\infty^{\rm c}}$ is isomorphic to $A_K$, 
and hence Nakayama's lemma tells us that
the number of generators of $X_{K_\infty^{\rm c}}$ as a $\mathbb{Z}_p[[{\rm Gal}(K_\infty^{\rm c}/K)]]$-module coincides with $\dim_{\mathbb{F}_p}(A_K/pA_K)$ (see \cite[Proposition 13.22]{Wa}).
In particular,
$X_{K_\infty^{\rm c}}$ is cyclic as a $\mathbb{Z}_p[[{\rm Gal}(K_\infty^{\rm c}/K)]]$-module 
if and only if $A_K$ is cyclic as an abelian group.

The objective of our study is to generalize these basic facts to the case of multiple $\mathbb{Z}_p$-extensions.
In other words, for the Galois group $X_{\widetilde K}$ of 
the maximal unramified abelian pro-$p$ extension of the maximal multiple $\mathbb{Z}_p$-extension $\widetilde{K}$ of $K$,
we aim for describing the number of generators of $X_{\widetilde{K}}$ 
as a $\mathbb{Z}_p[[{\rm Gal}(\widetilde{K}/K)]]$-module, 
and also for giving conditions 
that $X_{\widetilde{K}}$ is to be $\mathbb{Z}_p[[{\rm Gal}(\widetilde{K}/K)]]$-cyclic.
There is an important conjecture called Greenberg's generalized conjecture,
which states that $X_{\widetilde{K}}$ would be pseudo-null as a $\mathbb{Z}_p[[{\rm Gal}(\widetilde{K}/K)]]$-module.
A lot of evidences supporting the validity of the conjecture have been found.
However,
this conjecture does not imply the number of generators of $X_{\widetilde K}$.
Therefore, it is worthwhile 
to describe the number of generators of $X_{\widetilde K}$ as a $\mathbb{Z}_p[[{\rm Gal}(\widetilde K/K)]]$-module, 
to give the necessary and sufficient condition for $X_{\widetilde K}$ to be $\mathbb{Z}_p[[{\rm Gal}(\widetilde K/K)]]$-cyclic, 
to provide these numerical examples and so on. 
We expect that these studies will help us to gain a deeper understanding of other various properties of $X_{\widetilde K}$.

In \cite{MOMO}, the authors give some conditions that $X_{\widetilde{K}}$ is to be $\mathbb{Z}_p[[{\rm Gal}(\widetilde{K}/K)]]$-cyclic for imaginary quadratic fields $K$.
In this paper, we give methods for computation and examples about the results.
In the rest of this section, we prepare notation
and introduce the theorems in \cite{MOMO} (Theorems \ref{thm 3}, \ref{main thm of classification lambda=2}).
In \S 2, we give a method of computation and examples about Theorem \ref{thm 3}.
In \S 3, we introduce Sumida's and Koike's results which is a classification of Iwasawa modules of $\mathbb{Z}_p$-rank $2$.
In \S 4, we give a method of computation and examples about Theorem \ref{main thm of classification lambda=2}.

\subsection{Conditions for $X_{\widetilde{K}}$ to be $\mathbb{Z}_p[[{\rm Gal}(\widetilde{K}/K)]]$-cyclic}\label{notation and previous theorems}
Let $p$ be an odd prime number, $K$ an imaginary quadratic field in which $p$ does not split.
Denote by $K_\infty^{{\rm c}}$ and $K_\infty^{{\rm an}}$ the cyclotomic $\mathbb{Z}_p$-extension and the anti-cyclotomic $\mathbb{Z}_p$-extension of $K$, respectively.
Put $\widetilde{K}=K_\infty^{{\rm c}}K_\infty^{{\rm an}}$.
Then $\widetilde{K}$ is the maximal multiple $\mathbb{Z}_p$-extension over $K$ and ${\rm Gal}(\widetilde{K}/K) \cong \mathbb{Z}_p^2$.
Fix a topological generator $\widetilde{\sigma}$ (resp. $\widetilde{\tau}$) of ${\rm Gal}(\widetilde{K}/K_\infty^{{\rm an}})$ (resp. ${\rm Gal}(\widetilde{K}/K_\infty^{{\rm c}})$).
Then there is a ring isomorphism between the complete group ring $\mathbb{Z}_p[[{\rm Gal}(\widetilde{K}/K)]]$ and the formal power series ring $\mathbb{Z}_p[[S,T]]$ by sending 
$\widetilde{\sigma}$ and $\widetilde{\tau}$ to $1+S$ and $1+T$, respectively.
Note that it depends on the choice of topological generators $\widetilde{\sigma}$ and $\widetilde{\tau}$.
Also, we have a commutative diagram
$$
\begin{CD}
\mathbb{Z}_p[[{\rm Gal}(\widetilde{K}/K)]] @> \sim >> \mathbb{Z}_p[[S,T]]
\\
@VVV  @VVV
\\
\mathbb{Z}_p[[{\rm Gal}(K_\infty^{{\rm c}}/K)]] @> \sim >> \mathbb{Z}_p[[S]]
\end{CD},
$$
where the left vertical arrow is induced by the projection
${\rm Gal}(\widetilde{K}/K) \to {\rm Gal}(K_\infty^{{\rm c}}/K)$
and the right vertical arrow is defined by substituting $T=0$.
We identify $\mathbb{Z}_p[[{\rm Gal}(\widetilde{K}/K)]]$ (resp. $\mathbb{Z}_p[[{\rm Gal}(K_\infty^{{\rm c}}/K)]]$) with $\mathbb{Z}_p[[S,T]]$ (resp. $\mathbb{Z}_p[[S]]$) via the isomorphism above.
For any algebraic field $F$, denote by $X_F$ the Galois group of the maximal unramified abelian pro-$p$ extension $L_F$ of $F$.
If $F$ is a finite extension of the rational number field $\mathbb{Q}$, denote by $A_F$ the $p$-Sylow subgroup of the ideal class group of $F$.

It is known that, for any $\mathbb{Z}_p$-extension $K_\infty$ of $K$, $X_{K_\infty}$ is a finitely generated torsion $\mathbb{Z}_p[[{\rm Gal}(K_\infty/K)]]$-module.
Similarly, $X_{\widetilde{K}}$ is a finitely generated torsion $\mathbb{Z}_p[[S,T]]$-module by Greenberg \cite{Greenberg73}.
Moreover, for the cyclotomic $\mathbb{Z}_p$-extension $K_\infty^{{\rm c}}$, $X_{K_\infty^{{\rm c}}}$ is a finitely generated free $\mathbb{Z}_p$-module by Ferrero and Washington \cite{F-W} and \cite[Proposition 13.28]{Wa}.
By Nakayama's lemma, the number of generators of 
$X_{K_{\infty}^{\rm c}}$ 
(resp. $X_{\widetilde{K}}$) 
as $\mathbb{Z}_p[[S]]$-module coincides with 
$\dim_{\mathbb{F}_p} X_{K_\infty^{{\rm c}}}/(p,S)X_{K_\infty^{{\rm c}}}$
(resp. $\dim_{\mathbb{F}_p} X_{\widetilde{K}}/(p,S,T)X_{\widetilde{K}}$).
Furthermore, since $p$ does not split in $K$, we have 
$$
\dim_{\mathbb{F}_p} X_{K_\infty^{{\rm c}}}/(p,S)X_{K_\infty^{{\rm c}}}
=
\dim_{\mathbb{F}_p}  A_K/pA_K.
$$

We introduce the Iwasawa invariants and the characteristic ideals.
Let $\mathcal{O}$ be the ring of integers of a finite extension over the field $\mathbb{Q}_p$ of $p$-adic numbers,
and $M$ a finitely generated torsion $\mathcal{O}[[S]]$-module.
By the structure theorem of $\mathcal{O}[[S]]$-modules, there is an $\mathcal{O}[[S]]$-homomorphism
$$
\varphi \colon
M \to
\left(\bigoplus_i \mathcal{O}[[S]]/(\pi^{m_i})\right) 
\oplus 
\left(\bigoplus_j \mathcal{O}[[S]]/(f_j(S)^{n_j})\right) 
$$
with finite kernel and finite cokernel, where $m_i$, $n_j$ are non-negative integers, $\pi$ is a prime element in $\mathcal{O}$, and
$f_j(S) \in \mathcal{O}[S]$ are distinguished irreducible polynomials. 
We put
$$
{\rm char}(M)=\left(\prod_i \pi^{m_i}\prod_j f_j(S)^{n_j} \right),
$$
which is an ideal in $\mathcal{O}[[S]]$ and called the characteristic ideal of $M$.
We define the Iwasawa $\mu$-invariant $\mu(K_\infty/K)$ and the Iwasawa $\lambda$-invariant $\lambda(K_\infty/K)$ of a $\mathbb{Z}_p$-extension $K_\infty$ by $\sum_i m_i$ and $\sum_j n_j \deg f_j$ for $M=X_{K_\infty}$, respectively.

Now we introduce the theorems in \cite{MOMO} which give conditions for $X_{\widetilde{K}}$ to be $\mathbb{Z}_p[[{\rm Gal}(\widetilde{K}/K)]]$-cyclic.

\begin{thm}{\rm (\cite[ Thoerem 1.1]{MOMO})}\label{thm 3}
\makeatletter
  \parsep   = 0pt
  \labelsep = .5pt
  \def\@listi{%
     \leftmargin = 20pt \rightmargin = 0pt
     \labelwidth\leftmargin \advance\labelwidth-\labelsep
     \topsep     = 0\baselineskip
     \partopsep  = 0pt \itemsep       = 0pt
     \itemindent = 0pt \listparindent = 10pt}
  \let\@listI\@listi
  \@listi
  \def\@listii{%
     \leftmargin = 20pt \rightmargin = 0pt
     \labelwidth\leftmargin \advance\labelwidth-\labelsep
     \topsep     = 0pt \partopsep     = 0pt \itemsep   = 0pt
     \itemindent = 0pt \listparindent = 10pt}
  \let\@listiii\@listii
  \let\@listiv\@listii
  \let\@listv\@listii
  \let\@listvi\@listii
  \makeatother
Let $p$ be an odd prime number and $K$ an imaginary quadratic field such that $p$ does not split.
\begin{itemize}
\item[{\rm (i)}]
{\rm (trivial case)}
Assume that $L_K \cap \widetilde{K}=K$, then 
$$
\dim_{\mathbb{F}_p} (X_{\widetilde{K}}/(p,S,T)X_{\widetilde{K}})=\dim_{\mathbb{F}_p} (A_K/pA_K).
$$
\item[{\rm (ii)}]
Suppose that $L_K \cap \widetilde{K} \neq K$, and that $\dim_{\mathbb{F}_p} (A_K/pA_K)=1$.
\begin{itemize}
\item[{\rm (ii-a)}]
If $\lambda(K_\infty^{{\rm c}}/K)=1$, then 
$
\dim_{\mathbb{F}_p} (X_{\widetilde{K}}/(p,S,T)X_{\widetilde{K}})=1.
$
\item[{\rm (ii-b)}]
If $\lambda(K_\infty^{{\rm c}}/K)\ge 2$, then
$$
\dim_{\mathbb{F}_p} (X_{\widetilde{K}}/(p,S,T)X_{\widetilde{K}})=
\begin{cases}
1\ \ \text{if $L_K \subset \widetilde{K}$},
\\
2\ \ \text{otherwise}.
\end{cases}
$$
\end{itemize}
\end{itemize}
\end{thm}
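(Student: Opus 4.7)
The plan is to translate the question into one about abelian extensions of $K$ via class field theory, and then to carry out a case-by-case analysis guided by the interaction of $L_K$ and $\widetilde{K}$ inside the maximal abelian pro-$p$ extension of $K$ unramified outside $p$.

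First I would identify $X_{\widetilde{K}}/(S,T)X_{\widetilde{K}}$ with a Galois group. Writing $G = \mathrm{Gal}(L_{\widetilde{K}}/K)$, the action of $\mathrm{Gal}(\widetilde{K}/K)$ on $X_{\widetilde{K}}$ is by inner automorphisms, and since $X_{\widetilde{K}}$ is already abelian the commutator $[G,G]$ equals $(S,T)X_{\widetilde{K}}$. Hence $X_{\widetilde{K}}/(S,T)X_{\widetilde{K}} \cong \mathrm{Gal}(L/\widetilde{K})$, where $L$ is the maximal subextension of $L_{\widetilde{K}}$ abelian over $K$. Because $L/\widetilde{K}$ is unramified and $\widetilde{K}/K$ is ramified only at the unique prime $\mathfrak{p}$ of $K$ above $p$, $L/K$ is abelian and unramified outside $p$; writing $N$ for the maximal abelian pro-$p$ extension of $K$ unramified outside $p$, this gives $L = L_{\widetilde{K}} \cap N$, and the theorem reduces to computing $\dim_{\mathbb{F}_p}\mathrm{Gal}(L_{\widetilde{K}} \cap N/\widetilde{K})/p$.

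By Leopoldt's conjecture (known for imaginary quadratic fields), $\mathrm{Gal}(N/K)$ has $\mathbb{Z}_p$-rank $r_2(K)+1=2$, and its maximal $\mathbb{Z}_p$-free quotient is $\mathrm{Gal}(\widetilde{K}/K) \cong \mathbb{Z}_p^2$; consequently $\mathrm{Gal}(N/\widetilde{K})$ is the finite torsion subgroup of $\mathrm{Gal}(N/K)$, and the restriction map $\mathrm{Gal}(N/\widetilde{K}) \to A_K$ is an isomorphism (after handling the small-prime local torsion contributions at $\mathfrak{p}$). For case (i), $L_K \cap \widetilde{K}=K$ yields $\mathrm{Gal}(L_K\widetilde{K}/\widetilde{K}) \cong A_K$, so the natural surjection $\mathrm{Gal}(N/\widetilde{K}) \twoheadrightarrow \mathrm{Gal}(L_K\widetilde{K}/\widetilde{K})$ must be an isomorphism, forcing $N=L_K\widetilde{K}$. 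Hence $\mathrm{Gal}(L/\widetilde{K})/p \cong A_K/pA_K$, proving (i).

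For case (ii), $L_K \cap \widetilde{K} \neq K$ together with $A_K$ cyclic forces $L_K \cap \widetilde{K}$ to be the unique proper subextension of $L_K$ of the appropriate degree, and the surjection $A_K \cong \mathrm{Gal}(N/\widetilde{K}) \twoheadrightarrow \mathrm{Gal}(L_K\widetilde{K}/\widetilde{K})$ has nontrivial kernel; thus $L_{\widetilde{K}} \cap N$ may be strictly larger than $L_K\widetilde{K}$, and the relevant question is whether the extra extensions inside $N/L_K\widetilde{K}$ remain unramified over $\widetilde{K}$. To analyse this I would pass to the cyclotomic specialization $X_{\widetilde{K}}/TX_{\widetilde{K}}$ and compare with $X_{K_\infty^{\mathrm{c}}}$, which is $\mathbb{Z}_p$-free of rank $\lambda=\lambda(K_\infty^{\mathrm{c}}/K)$ by Ferrero--Washington. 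The value of $\lambda$ measures the available extra directions: in (ii-a), $\lambda=1$ supplies exactly the one missing generator of $\mathrm{Gal}(L_K\widetilde{K}/\widetilde{K})$, giving a single generator; in (ii-b) with $\lambda\ge 2$, there is an independent extra $\mathbb{Z}_p$-direction, and a linear-independence check modulo $(p,S,T)$ shows it combines with the $A_K$-contribution to produce two generators unless $L_K\subset\widetilde{K}$, in which case the $A_K$-contribution vanishes and only the extra direction survives. The main obstacle is exactly this last independence/dependence check, together with handling the borderline $L_K \subset \widetilde{K}$ where the two contributions merge.
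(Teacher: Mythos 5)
Your opening reduction contains the decisive gap. For $G=\mathrm{Gal}(L_{\widetilde{K}}/K)$ with $\mathrm{Gal}(\widetilde{K}/K)\cong\mathbb{Z}_p^2$, the closure of $[G,G]$ is \emph{not} $(S,T)X_{\widetilde{K}}$ in general: choosing lifts $\tilde{\sigma},\tilde{\tau}\in G$ of the two topological generators, one has $\overline{[G,G]}=(S,T)X_{\widetilde{K}}+\mathbb{Z}_p\,[\tilde{\sigma},\tilde{\tau}]$, so the coinvariants $X_{\widetilde{K}}/(S,T)X_{\widetilde{K}}$ only \emph{surject} onto $\mathrm{Gal}(L/\widetilde{K})$, with kernel generated by the class of the commutator of the lifts (this is the essential difference between multiple $\mathbb{Z}_p$-extensions and the one-variable case, where your identity is Washington's Lemma). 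The defect vanishes when $G$ splits as $X_{\widetilde{K}}\rtimes\mathbb{Z}_p^2$, which happens exactly when the unique prime $\mathfrak{p}$ of $K$ above $p$ is totally ramified in $\widetilde{K}/K$; since the inertia fixed field in $\widetilde{K}$ equals $L_K\cap\widetilde{K}$, that is precisely the hypothesis of case (i). But in case (ii) the hypothesis $L_K\cap\widetilde{K}\neq K$ \emph{forbids} total ramification, so there your identification breaks down, and the possible extra contribution of the commutator class modulo $(p,S,T)$ is exactly of the size (one generator) that the theorem is adjudicating. Thus the claim that the theorem "reduces to computing $\dim_{\mathbb{F}_p}\mathrm{Gal}(L_{\widetilde{K}}\cap N/\widetilde{K})/p$" is unjustified in the only cases where the statement is nontrivial.

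Two further points. First, the asserted isomorphism $\mathrm{Gal}(N/\widetilde{K})\stackrel{\sim}{\to}A_K$ is not correct as stated: by the class field theory sequence displayed in \S 2 of this paper, $\mathrm{Gal}(N/\widetilde{K})=\mathrm{Tor}_{\mathbb{Z}_p}\mathfrak{X}_K$ surjects onto $\mathrm{Gal}(L_K/L_K\cap\widetilde{K})$ (a proper quotient of $A_K$ in case (ii)), with kernel $\mathrm{Tor}_{\mathbb{Z}_p}\bigl(U_{\mathfrak{p}}^{(1)}/\overline{\varphi(E_K^{(1)})}\bigr)$, which is nontrivial for instance when $p=3$ and $\zeta_3\in K_{\mathfrak{p}}$; since the theorem covers all odd $p$ (and most of the paper's examples have $p=3$), "handling the small-prime local torsion" is a genuine case, not a parenthetical, and it also undermines your deduction $N=L_K\widetilde{K}$ in case (i). Second, in case (ii) everything that makes the statement hard --- why $\lambda=1$ forces dimension one regardless of whether $L_K\subset\widetilde{K}$, and why for $\lambda\geq 2$ the answer is governed exactly by $L_K\subset\widetilde{K}$ --- is deferred to "a linear-independence check" that you name as the main obstacle but never perform; note also that the comparison map $X_{\widetilde{K}}/TX_{\widetilde{K}}\to X_{K_\infty^{\rm c}}$ you invoke is in general neither injective nor surjective (its cokernel is $\mathrm{Gal}(\widetilde{K}\cap L_{K_\infty^{\rm c}}/K_\infty^{\rm c})$, as in the descent sequence used later in this paper). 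The present paper does not reprove the theorem but quotes it from [MOMO, Theorem 1.1], where precisely these missing steps --- the descent with inertia terms and a choice of generators of $X_{K_\infty^{\rm c}}$ adapted to $\mathrm{Gal}(L_K\cap\widetilde{K}/K)$ as in condition (CG) --- constitute the actual proof. As written, your argument establishes neither (i) in full generality nor (ii).
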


\begin{thm}{\rm (\cite[Thoerem 5.12]{MOMO})}\label{main thm of classification lambda=2}
Let $p$ be an odd prime number, $K$ an imaginary quadratic field such that $p$ does not split.
Suppose the following conditions:
\makeatletter
  \parsep   = 0pt
  \labelsep = .5pt
  \def\@listi{%
     \leftmargin = 20pt \rightmargin = 0pt
     \labelwidth\leftmargin \advance\labelwidth-\labelsep
     \topsep     = 0\baselineskip
     \partopsep  = 0pt \itemsep       = 0pt
     \itemindent = 0pt \listparindent = 10pt}
  \let\@listI\@listi
  \@listi
  \def\@listii{%
     \leftmargin = 20pt \rightmargin = 0pt
     \labelwidth\leftmargin \advance\labelwidth-\labelsep
     \topsep     = 0pt \partopsep     = 0pt \itemsep   = 0pt
     \itemindent = 0pt \listparindent = 10pt}
  \let\@listiii\@listii
  \let\@listiv\@listii
  \let\@listv\@listii
  \let\@listvi\@listii
  \makeatother
\ 
\begin{itemize}
\item
$\dim_{\mathbb{F}_p}(A_K/pA_K)=2$ and ${\rm Gal}(L_K \cap \widetilde{K}/K)$ is a direct summand of ${\rm Gal}(L_K/K)$.
\item
$\lambda(K_\infty^{{\rm c}}/K) =2$.
\item
Let $\alpha, \beta \in \overline{\mathbb{Q}_p}$ be the roots of the distinguished polynomial generating ${\rm char}(X_{K_\infty^{{\rm c}}})$.
Then $\alpha \neq \beta$.
\end{itemize}
We denote by ${\rm ord}$ the normalized additive valuation on $\mathcal{O}:=\mathbb{Z}_p[\alpha, \beta]$.
Put $m:=\min\{ {\rm ord}(\alpha),{\rm ord}(\beta) \}$.
Let $x_2 \in X_{K_\infty^{{\rm c}}}$ be a preimage of a generator of ${\rm Gal}(L_K/L_K \cap \widetilde{K})$.
Also, we denote 
by the vector
$
\begin{bmatrix}
\mu_{21}
\\
\mu_{22}
\end{bmatrix}
$
the image of $x_2 \otimes 1$ under the injective map
$$
X_{K_\infty^{{\rm c}}}\otimes_{\mathbb{Z}_p} \mathcal{O}
\to 
\mathcal{O}[[S]]/(S-\alpha) \oplus \mathcal{O}[[S]]/(S-\beta)
$$
defined in {\rm \S \ref{Sumida's and Koike's results}}.
Then, $X_{\widetilde{K}}$ is $\mathbb{Z}_p[[S,T]]$-cyclic if and only if one of the following holds:
$$
\hspace*{-4pt}
\begin{array}{llllll}
{\rm (i)} & 
k>0, & {\rm ord}(\beta-\alpha)-k< m, & 
\\
{\rm (ii)} & 
k>0, & {\rm ord}(\beta-\alpha)-k= m,&   {\rm ord}(\mu_{21})=0, 
\\ 
{\rm (iii)} & 
k=0, & {\rm ord}(\beta-\alpha)= m,\ n_1< n_2, & 
{\rm ord}(\mu_{21})=0,
\\
{\rm (iv)} & 
k=0, & {\rm ord}(\beta-\alpha)= m,\ n_1 \ge n_2, & 
\begin{cases}
{\rm ord}(\mu_{21})=0,
\\
{\rm ord}(\mu_{22})={\rm ord}(\beta)-{\rm ord}(\alpha),
\end{cases}
\end{array}
$$
where each $n_1$ and $n_2$ is defined by 
$p^{n_1}=\#{\rm Gal}(L_K \cap \widetilde{K}/K)$ and $p^{n_2}=\# {\rm Gal}(L_K/L_K \cap \widetilde{K})$, respectively.
\end{thm}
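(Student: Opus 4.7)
The plan is to apply Nakayama's lemma: since $X_{\widetilde{K}}$ is finitely generated over the complete local ring $\mathbb{Z}_p[[S,T]]$, cyclicity is equivalent to $\dim_{\mathbb{F}_p} X_{\widetilde{K}}/(p,S,T)X_{\widetilde{K}}=1$. Under the hypothesis $\dim_{\mathbb{F}_p}(A_K/pA_K)=2$, a minimal generating set of $X_{\widetilde{K}}$ has at most two elements, and the direct-summand condition on ${\rm Gal}(L_K\cap\widetilde{K}/K)\subset {\rm Gal}(L_K/K)$ lets us choose lifts $y_1,y_2\in X_{\widetilde{K}}$ whose images in ${\rm Gal}(L_K/K)$ generate the two summands ${\rm Gal}(L_K\cap\widetilde{K}/K)$ and ${\rm Gal}(L_K/L_K\cap\widetilde{K})$, respectively. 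Cyclicity is then equivalent to the existence of $g\in\mathbb{Z}_p[[S,T]]$ with $y_2\equiv g\cdot y_1\pmod{(p,S,T)X_{\widetilde{K}}}$.

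To test this, I would project to the cyclotomic line $T=0$, where $X_{K_\infty^{\rm c}}$ is $\mathbb{Z}_p$-free by Ferrero--Washington and admits the explicit Sumida--Koike embedding $X_{K_\infty^{\rm c}}\otimes\mathcal{O}\hookrightarrow \mathcal{O}[[S]]/(S-\alpha)\oplus \mathcal{O}[[S]]/(S-\beta)$ recalled in \S \ref{Sumida's and Koike's results}. The $T=0$ image of $y_2$ is precisely the element $x_2$ of the statement, with coordinates $(\mu_{21},\mu_{22})$, while the image of $y_1$ is controlled by the ramification data of $\widetilde{K}/K_\infty^{\rm c}$ and, up to units, takes a canonical form depending on the integer $k$ together with the sizes $p^{n_1},p^{n_2}$ of the two summand factors. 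Solvability of $y_2=f(S)\,y_1$ modulo $(p,S)$ in the Sumida--Koike model then becomes a purely valuation-theoretic question in $\mathcal{O}$.

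Writing this equation coordinatewise in $\mathcal{O}[[S]]/(S-\alpha)\oplus\mathcal{O}[[S]]/(S-\beta)$ yields a pair of congruences governed by $m=\min\{{\rm ord}(\alpha),{\rm ord}(\beta)\}$ and by ${\rm ord}(\beta-\alpha)$. The split $k>0$ versus $k=0$ records how deeply $S$ annihilates $y_1$; the further comparison of ${\rm ord}(\beta-\alpha)-k$ (or of ${\rm ord}(\beta-\alpha)$ when $k=0$) with $m$ determines which summand is dominant; and the conditions ${\rm ord}(\mu_{21})=0$, respectively ${\rm ord}(\mu_{22})={\rm ord}(\beta)-{\rm ord}(\alpha)$, encode when the two coordinates of $y_2$ can be simultaneously matched by a single power-series multiple of $y_1$. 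The four cases (i)--(iv) are then the exhaustive logical possibilities. One must also check that no extra obstruction appears when lifting the solution back from $T=0$ to the full $\mathbb{Z}_p[[S,T]]$-action, but the direct-summand hypothesis ensures this.

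The hardest step should be case (iv), $k=0$ with $n_1\geq n_2$, where the primary condition on $\mu_{21}$ alone no longer pins down the second coordinate and a secondary condition on $\mu_{22}$ is needed. Here the Sumida--Koike normal form interacts nontrivially with the change of basis from the abstract generators of the normal form back to the chosen lift $x_2$, and verifying the necessity and sufficiency of both conditions requires careful bookkeeping of the basis transformation. Once this delicate case is handled, the remaining three reduce to routine valuation estimates.
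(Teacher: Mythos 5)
You should first note that this paper does not actually prove the statement: Theorem \ref{main thm of classification lambda=2} is imported verbatim from \cite[Theorem 5.12]{MOMO}, and the present paper only supplies computational tools (Theorem \ref{main thm}, Lemma \ref{main lem}, Corollary \ref{kuri lem rem}) and examples. Judged on its own terms, your proposal has a genuine gap at its central step: the descent from $\widetilde{K}$ to the cyclotomic line. You assert that cyclicity of $X_{\widetilde{K}}$ modulo $(p,S,T)$ reduces to solving $y_2\equiv g\,y_1$ in the Sumida--Koike model of $X_{K_\infty^{\rm c}}$ modulo $(p,S)$, and that ``no extra obstruction appears when lifting the solution back from $T=0$ \dots the direct-summand hypothesis ensures this.'' That last sentence is precisely the hard part of the theorem and is not a consequence of the direct-summand hypothesis: the natural map $X_{\widetilde{K}}/TX_{\widetilde{K}}\to X_{K_\infty^{\rm c}}$ has kernel and cokernel governed by inertia above $p$ and by $\mathrm{Gal}(\widetilde{K}\cap L_{K_\infty^{\rm c}}/K_\infty^{\rm c})$ (compare the exact sequence $(X_{\widetilde{K}})_{\mathrm{Gal}(\widetilde{K}/K_\infty)}\to X_{K_\infty}\to \mathrm{Gal}(\widetilde{K}\cap L_{K_\infty}/K_\infty)\to 0$ invoked in the application section), so solvability on the cyclotomic line is a priori neither necessary nor sufficient for $\dim_{\mathbb{F}_p}X_{\widetilde{K}}/(p,S,T)X_{\widetilde{K}}=1$. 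In the actual development the direct-summand hypothesis is used to normalize generators (the condition (CG) in \S 4) and to identify $L_K\cap\widetilde{K}=K_{n_1}^{\rm an}$, not to trivialize this descent; controlling the kernel and cokernel is the substance of \cite{MOMO}.

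Two further points are asserted rather than proved. First, that $X_{\widetilde{K}}$ needs at most two generators does not follow from $\dim_{\mathbb{F}_p}(A_K/pA_K)=2$ alone: Theorem \ref{thm 3} already shows that the number of generators of $X_{\widetilde{K}}$ depends on $L_K\cap\widetilde{K}$ and on whether $L_K\subset\widetilde{K}$, through a genus-theoretic computation you do not carry out. Second, your outline never explains where the specific invariants in (i)--(iv) come from: the comparison of ${\rm ord}(\beta-\alpha)-k$ with $m=\min\{{\rm ord}(\alpha),{\rm ord}(\beta)\}$ reflects the structure of $A_K\otimes_{\mathbb{Z}_p}\mathcal{O}$ as determined by the $\Lambda$-module class of $X_{K_\infty^{\rm c}}$ (cf.\ \cite[Lemma 5.2]{MOMO}, as used in the proof of Proposition \ref{test}), the dichotomy $n_1<n_2$ versus $n_1\ge n_2$ arises from matching the summands of $\mathrm{Gal}(L_K/K)$ with coordinates of the embedding (\ref{definition of k}), and one must verify that ${\rm ord}(\mu_{21})=0$ and ${\rm ord}(\mu_{22})={\rm ord}(\beta)-{\rm ord}(\alpha)$ are independent of the choice of $x_2$ and of the basis $\mathbf{e}_1,\mathbf{e}_2$ --- a well-definedness check your sketch flags only in case (iv) and never performs. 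As written, the proposal is a plausible plan pointing toward the right objects, but the claimed equivalence cannot be extracted from it.
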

\section{examples of Theorem \ref{thm 3}}
In this section, we will give examples of Theorem \ref{thm 3}.
As in the previous section, let $p$ be an odd prime number, $K$ an imaginary quadratic field
in which $p$
does not split.
We denote by $ \frak{X}_{K}$ the Galois group of the maximal abelian pro-$p$ extension $M_{K}/K$
unramified outside
the prime lying above $p$.
Let $E_{K}$ be the unit group of $K$.
Put a prime  $\mathfrak{p}$ of $K$ lying above $p$.
$E_{K}^{(1)}=\{ u \in E_{K} ~|~u \equiv 1 \mathrm{~mod~}\frak{p} \}$.
By class field theory, we have the  following exact sequence
\[
0 \rightarrow \mathrm{Tor}_{\mathbb{Z}_p}\left( U_{\frak{p}}^{(1)} / \overline{\varphi(E_{K}^{(1)})}\right)
  \rightarrow  \mathrm{Tor}_{\mathbb{Z}_p} \frak{X}_{K}
  \rightarrow \mathrm{Gal}(L_{K}/L_{K} \cap \widetilde{K})
  \rightarrow  0,
\]
where $U_{\frak{p}}^{(1)}$ is the group of the principal units in the completion of $K$ with respect to $\mathfrak{p}$,
$\displaystyle{\varphi: E_{K}^{(1)} \rightarrow U_{\frak{p}}^{(1)}}$ is the natural homomorphism, and
$\overline{\varphi(E_{K}^{(1)})}$ is the closure of $\varphi(E_{K}^{(1)})$ in $U_{\frak{p}}^{(1)}$.
We know $L_{K} \cap \widetilde{K} \subset K_\infty^{{\rm an}} $.
Combining the exact sequence above with the following lemma, 
we can determine the integer $n$ such that $L_{K} \cap \widetilde{K}  = K_n^{{\rm an}}$.
\begin{lem}{\rm (Fujii {\cite[Lemma $4.3$]{Fu1}})}\label{Fujii}
Let $I_{K}(p)$ be the subgroup of the group of fractional ideals of $K$ prime to $p$ and
$S_{K}(p^{n})$ the Strahl group of $K$ modulo $p^n$, which consists of all fractional principal ideals $(\alpha)$
of $K$ satisfying $\alpha \equiv 1 ~~\mathrm{~mod~}p^n.$
Let $p^N = p\mathrm{~exp}(A_{K})$, where $\mathrm{exp}(A_{K})$ is the exponent of $A_{K}$.
If 
$$
(I_K(p)/S_K(p^n)) \otimes \mathbb{Z}_p \cong A \oplus \mathbb{Z}/p^{N_1} \mathbb{Z} \oplus \mathbb{Z}/p^{N_2} \mathbb{Z}
$$ 
for some 
integers $N_1,N_2$ satisfying $N+2\leq n,N<N_{i} ~(i=1,2)$, then we have $\mathrm{Tor}_{\mathbb{Z}_p} \frak{X}_{K} \cong A$, non-canonically.
\end{lem}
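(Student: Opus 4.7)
The plan is to exploit that for an imaginary quadratic field $K$, Leopoldt's conjecture is trivially satisfied, so $\frak{X}_K$ has $\mathbb{Z}_p$-rank exactly $r_2(K)+1=2$. Hence $\frak{X}_K \cong \mathbb{Z}_p^{\,2} \oplus T$ where $T:=\mathrm{Tor}_{\mathbb{Z}_p}\frak{X}_K$ is finite, and the goal becomes to identify $T$ as a direct summand of the finite ray class group $G_n := (I_K(p)/S_K(p^n))\otimes\mathbb{Z}_p$ for a single well-chosen $n$.

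By class field theory, $\frak{X}_K = \varprojlim_n G_n$, and the natural surjection $\pi_n \colon \frak{X}_K \twoheadrightarrow G_n$ has kernel equal to the image in $\frak{X}_K$ of the higher principal local units $U_\frak{p}^{(n)}$ modulo global units. Since the $p$-adic logarithm gives $U_\frak{p}^{(1)}\cong\mathbb{Z}_p^{\,2}\oplus(\text{local roots of unity})$ and $U_\frak{p}^{(n)}$ becomes arbitrarily $p$-divisible inside $U_\frak{p}^{(1)}$, one obtains $\ker(\pi_n)\subset p^{m(n)}\frak{X}_K$ with $m(n)\to\infty$ linearly in $n$.

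Next I would bound $\exp(T)$ using the exact sequence displayed just before the lemma, which exhibits $T$ as an extension of $\mathrm{Gal}(L_K/L_K\cap\widetilde K)$ (a subgroup of $A_K$, so of exponent dividing $\exp(A_K)$) by $\mathrm{Tor}_{\mathbb{Z}_p}(U_\frak{p}^{(1)}/\overline{\varphi(E_K^{(1)})})$, a cyclic $p$-group coming from the local $p$-power roots of unity at $\frak{p}$, which for an odd prime $p$ nonsplit in an imaginary quadratic $K$ has exponent dividing $p$. This yields $\exp(T)\mid p\cdot\exp(A_K) = p^N$. In particular, for $n\geq N+2$ the restriction $\pi_n|_T$ is injective and $G_n/\pi_n(T)$ is a quotient of $\mathbb{Z}_p^{\,2}$, hence generated by at most two elements.

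Finally, assume $G_n\cong A\oplus\mathbb{Z}/p^{N_1}\oplus\mathbb{Z}/p^{N_2}$ with $N_1,N_2>N$. By the uniqueness of the invariant factor decomposition of a finite abelian $p$-group and the bound $\exp(T)\leq p^N<p^{N_i}$, the two cyclic summands of exponent $p^{N_1}$ and $p^{N_2}$ must account precisely for the image of the free rank-$2$ part of $\frak{X}_K$, forcing $A\cong\pi_n(T)\cong T$. The main obstacle is pinning down the exponent bound $\exp(T)\mid p^N$ and the divisibility depth of $\ker(\pi_n)$ tightly enough to justify the specific threshold $N+2\leq n$; once those are in hand the invariant factor comparison is essentially formal.
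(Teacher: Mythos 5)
First, a point of comparison that isn't: the paper offers no proof of this statement --- it is quoted verbatim from Fujii \cite[Lemma 4.3]{Fu1} --- so there is no in-paper argument to measure you against; I can only judge the proposal on its own merits, and on those merits your route is the natural one and essentially works. The skeleton is sound: Leopoldt is trivial for an imaginary quadratic field (the unit rank is zero), so $\mathfrak{X}_K\cong\mathbb{Z}_p^{2}\oplus T$ with $T=\mathrm{Tor}_{\mathbb{Z}_p}\mathfrak{X}_K$ finite; the displayed exact sequence before the lemma gives $\exp(T)\mid p\cdot\exp(A_K)=p^{N}$, since $\mathrm{Gal}(L_K/L_K\cap\widetilde{K})$ is a subquotient of $A_K$ and the local torsion term has exponent dividing $p$ (for $p$ odd and $[K_{\mathfrak{p}}:\mathbb{Q}_p]=2$ one has $\mu_{p^\infty}(K_{\mathfrak{p}})\subseteq\mu_p$, nontrivial only for $p=3$, $K_{\mathfrak{p}}=\mathbb{Q}_3(\sqrt{-3})$, and any finite subgroup of $U_{\mathfrak{p}}^{(1)}\cong\mu\times\mathbb{Z}_p^2$ lies in $\mu$); and once the kernel depth is known, your invariant-factor comparison is indeed formal.

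The step you yourself flag as ``the main obstacle'' is genuinely the only missing piece, but it is routine and you should close it explicitly. Write $e=v_{\mathfrak{p}}(p)\in\{1,2\}$; the modulus $p^n$ is $\mathfrak{p}^{en}$, so the kernel of $\pi_n$ is the Artin image of $U_{\mathfrak{p}}^{(en)}$, not of $U_{\mathfrak{p}}^{(n)}$ (harmless bookkeeping, but it matters for the constant). For $p$ odd and $k>e/(p-1)$ one has $\bigl(U_{\mathfrak{p}}^{(k)}\bigr)^p=U_{\mathfrak{p}}^{(k+e)}$, whence $U_{\mathfrak{p}}^{(en)}=\bigl(U_{\mathfrak{p}}^{(e)}\bigr)^{p^{n-1}}$ (for $p=3$ ramified, start the descent at $k=2$), so $\ker(\pi_n)\subseteq p^{\,n-1}\mathfrak{X}_K$. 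For $n\geq N+2$ the depth is $\geq N+1\geq N$, which already kills the $T$-component of the kernel since $p^{N}T=0$; hence $\ker(\pi_n)=W\times\{0\}$ with $W\subseteq p^{\,n-1}\mathbb{Z}_p^{2}$ of finite index, and $G_n\cong(\mathbb{Z}_p^{2}/W)\oplus T\cong T\oplus\mathbb{Z}/p^{a}\oplus\mathbb{Z}/p^{b}$. One refinement makes your last paragraph cleaner than you feared: you do not need to force $a,b>N$ from the depth estimate at all. Counting invariant factors of order $>p^{N}$, the left side has at most two (all factors of $T$ have order $\leq p^{N}$) while the right side $A\oplus\mathbb{Z}/p^{N_1}\oplus\mathbb{Z}/p^{N_2}$ has at least two by the hypothesis $N<N_i$; Krull--Schmidt then forces $a,b>N$, forces $A$ to have no factor of order $>p^{N}$, gives $\{a,b\}=\{N_1,N_2\}$, and cancellation leaves $T\cong A$. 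With these two points supplied, your proof is complete.
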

We also use the following criterion of whether $L_{K} \subset \widetilde{K}$.
\begin{lem}{\rm (Minardi \cite[Corollary of Proposition $6.\mathrm{B}$]{Min})}\label{brink}
Let $K=\mathbb{Q}(\sqrt{-d})$ with a square-free positive integer $d$.
If $p=3$ and $d \not\equiv 3 \mathrm{~mod~}9$, then
$L_K \subset \widetilde{K}$
if and only if 
the class number of $\mathbb{Q}(\sqrt{3d})$ is not divisible by $3$.
\end{lem}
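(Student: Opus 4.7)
The natural approach is to reduce the containment $L_K \subset \widetilde{K}$ to a question about the anti-cyclotomic tower, and then to translate that question into class-number data for the real quadratic mirror field $F = \mathbb{Q}(\sqrt{3d})$. First, since $L_K/K$ is everywhere unramified while $K^{\mathrm{c}}_\infty/K$ is totally ramified at the unique prime above $3$ (using that $3$ does not split in $K$), one has $L_K \cap K^{\mathrm{c}}_\infty = K$. A closer analysis of the inertia subgroup at $\mathfrak{p} \mid 3$ inside $\mathrm{Gal}(\widetilde{K}/K) \cong \mathbb{Z}_3^2$ shows that this inertia is a rank-$1$ $\mathbb{Z}_3$-subgroup surjecting onto the cyclotomic direction, so that the maximal subextension of $\widetilde{K}/K$ unramified at $3$ already sits inside $K^{\mathrm{an}}_\infty$. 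The hypothesis $d \not\equiv 3 \pmod 9$ is what guarantees the non-degenerate local behavior needed for this inertia computation. Consequently $L_K \subset \widetilde{K}$ is equivalent to $L_K \subset K^{\mathrm{an}}_\infty$.

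Second, I would describe the anti-cyclotomic tower $K^{\mathrm{an}}_\infty/K$ via genus theory. Because complex conjugation acts by $-1$ on both $A_K = \mathrm{Gal}(L_K/K)$ and on $\mathrm{Gal}(K^{\mathrm{an}}_\infty/K)$, the layer $K^{\mathrm{an}}_1/\mathbb{Q}$ is an $S_3$-extension; a discriminant computation at $3$ (again using $d \not\equiv 3 \pmod 9$ to rule out exceptional conductors) identifies its other quadratic subfield with $F = \mathbb{Q}(\sqrt{3d})$. Iterating this identification layer by layer realizes $K^{\mathrm{an}}_n/K$ as coming, via class field theory over $F$, from a specific cyclic cubic tower above $F$.

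Combining these two reductions, the equivalence $L_K \subset K^{\mathrm{an}}_\infty \iff 3 \nmid h(F)$ should then follow from a Scholz-type Spiegelungssatz: a non-trivial unramified cyclic cubic extension of $F$ would lift to an anti-cyclotomic cubic of $K$ that fails to be captured in $K^{\mathrm{an}}_\infty$, obstructing the containment, while conversely $3 \nmid h(F)$ forces every class in $A_K$ to be realized inside the anti-cyclotomic tower. The main obstacle will be this last step: carefully aligning the eigencharacter decomposition of $\mathrm{Cl}(K) \otimes \mathbb{F}_3$ and $\mathrm{Cl}(F) \otimes \mathbb{F}_3$ under $\mathrm{Gal}(KF/\mathbb{Q}) \cong (\mathbb{Z}/2)^2$, so that the local unit-index correction terms at $3$ disappear — which is again precisely where the hypothesis $d \not\equiv 3 \pmod 9$ does its work.
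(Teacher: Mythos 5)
The paper gives no proof of Lemma~\ref{brink} --- it is quoted verbatim from Minardi's thesis --- so your sketch must stand on its own, and it contains two concrete errors before the final reflection step. First, the inertia subgroup of the unique prime above $3$ in ${\rm Gal}(\widetilde{K}/K)\cong\mathbb{Z}_3^2$ is \emph{not} a rank-$1$ subgroup: its fixed field is an everywhere-unramified abelian extension of $K$, hence contained in the $3$-Hilbert class field and finite over $K$, so the inertia subgroup has finite index (rank $2$). The correct reason for the reduction $L_K\subset\widetilde{K}\iff L_K\subset K_\infty^{\rm an}$ (which, as a statement, is fine) is that complex conjugation acts by $-1$ on $A_K$ and on ${\rm Gal}(K_\infty^{\rm an}/K)$ but by $+1$ on the cyclotomic direction; the hypothesis $d\not\equiv 3\pmod 9$ plays no role in this step, contrary to what you claim. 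Second, your identification of $F=\mathbb{Q}(\sqrt{3d})$ as ``the other quadratic subfield'' of the $S_3$-extension $K_1^{\rm an}/\mathbb{Q}$ is impossible: a dihedral sextic field has exactly one quadratic subfield, namely $K$ itself. The mirror field $F$ enters only through the biquadratic field $K(\zeta_3)=\mathbb{Q}(\sqrt{-3},\sqrt{-d})$, of which it is the real quadratic subfield; the anti-cyclotomic tower is not obtained by class field theory over $F$, and this layer-by-layer identification cannot be repaired as stated.

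The actual mechanism --- which the paper sets up in the exact sequence displayed just before the lemma --- is this: since $E_K$ is finite and $E_K^{(1)}$ is trivial, the sequence gives $L_K\subset\widetilde{K}$ if and only if ${\rm Tor}_{\mathbb{Z}_3}\,\mathfrak{X}_K\cong{\rm Tor}_{\mathbb{Z}_3}\bigl(U_{\mathfrak{p}}^{(1)}\bigr)$, and the torsion of $U_{\mathfrak{p}}^{(1)}$ is $\mu_3(K_{\mathfrak{p}})$, which vanishes exactly when $\zeta_3\notin K_{\mathfrak{p}}$, i.e.\ when $K_{\mathfrak{p}}\neq\mathbb{Q}_3(\sqrt{-3})$; this is precisely where $d\not\equiv 3\pmod 9$ is used (you misattribute it to inertia non-degeneracy and to conductor bookkeeping). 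Thus the lemma reduces to: ${\rm Tor}_{\mathbb{Z}_3}\,\mathfrak{X}_K=0\iff 3\nmid h(F)$, which is Minardi's reflection computation via Kummer theory over $K(\zeta_3)$, comparing the $\chi$- and $\omega\chi^{-1}$-eigenspaces. Your final paragraph gestures at the right Spiegelungssatz idea, but both directions are asserted rather than proved; in particular the converse (``$3\nmid h(F)$ forces every class in $A_K$ to be realized inside the anti-cyclotomic tower'') is the entire content, and Scholz's inequality $r_3(F)\le r_3(K)\le r_3(F)+1$ alone does not yield it --- one needs the unit/local-torsion analysis above to see that no correction terms survive. So the proposal identifies the right circle of ideas but the two reductions it rests on are wrongly justified, and the key equivalence is left unestablished.
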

Using Lemmas \ref{Fujii}, \ref{brink} above and referring Fukuda's table for the
$\lambda$-invariants of imaginary quadratic fields (\cite{Fukuda}),
we get the following examples.
\begin{Example}
\begin{rm}
Let $p=7$ and $K=\mathbb{Q}(\sqrt{-71})$.
Then the prime $7$ is inert in $K$.
In this case we have $\lambda(K_\infty^{{\rm c}}/K)=1$.
We can check that $A_K \cong \mathbb{Z}/7\mathbb{Z}$ and that $L_K \cap \widetilde{K}=K$ by Lemma \ref{Fujii}.
Hence $X_{\widetilde{K}}$ is cyclic as a $\mathbb{Z}_p[[\mathrm{Gal}(\widetilde{K}/K)]]$-module
by Theorem \ref{thm 3}(i).
\end{rm}
\end{Example}
\begin{Example}
\begin{rm}
Let $p=3$ and $K=\mathbb{Q}(\sqrt{-61})$.
Then the prime $3$ is inert in $K$.
In this case we have $\lambda(K_\infty^{{\rm c}}/K)=1$.
We can check that $A_K \cong \mathbb{Z}/3\mathbb{Z}$
and $L_K \subset \widetilde{K}$ by Lemma \ref{brink}.
Hence $X_{\widetilde{K}}$ is cyclic as a $\mathbb{Z}_p[[\mathrm{Gal}(\widetilde{K}/K)]]$-module
by Theorem \ref{thm 3}(ii-a).
\end{rm}
\end{Example}
\begin{Example}
\begin{rm}
Let $p=3$ and $K=\mathbb{Q}(\sqrt{-1207})$.
Then the prime $3$ is inert in $K$.
In this case we have $\lambda(K_\infty^{{\rm c}}/K)=2$.
We can check that $A_K \cong \mathbb{Z}/3^2\mathbb{Z}$ and that
$ L_K \subset \widetilde{K}$ by Lemma \ref{brink}.
Hence $X_{\widetilde{K}}$ is cyclic as a $\mathbb{Z}_p[[\mathrm{Gal}(\widetilde{K}/K)]]$-module
by Theorem \ref{thm 3}(ii-b).
\end{rm}
\end{Example}
\begin{Example}
\begin{rm}
Let $p=3$ and $K=\mathbb{Q}(\sqrt{-186})$.
Then the prime $3$ is ramified in $K$.
In this case we have $\lambda(K_\infty^{{\rm c}}/K)=2$.
We can check that $A_K \cong \mathbb{Z}/3\mathbb{Z}$ and that $L_K \subset \widetilde{K}$
by Lemma \ref{brink}.
Hence $X_{\widetilde{K}}$ is cyclic as a $\mathbb{Z}_p[[\mathrm{Gal}(\widetilde{K}/K)]]$-module
by Theorem \ref{thm 3}(ii-b).
\end{rm}
\end{Example}
\begin{Example}
\begin{rm}
Let $p=3$ and $K=\mathbb{Q}(\sqrt{-6382})$.
Then the prime $3$ is inert in $K$.
In this case we have $\lambda(K_\infty^{{\rm c}}/K)=2$.
We can check that $A_K \cong \mathbb{Z}/3^2\mathbb{Z}$ and that $K \neq L_K \cap \widetilde{K}$ and $ L_K \not\subset \widetilde{K}$.
Hence $X_{\widetilde{K}}$ is not cyclic as a $\mathbb{Z}_p[[\mathrm{Gal}(\widetilde{K}/K)]]$-module
by Theorem \ref{thm 3}(ii-b).
\end{rm}
\end{Example}
\section{Sumida's and Koike's results}\label{Sumida's and Koike's results}
In this section, we prepare notation for giving examples of Theorem \ref{main thm of classification lambda=2}.
Let $E$ be a finite extension over $\mathbb{Q}_p$.
Let $\mathcal{O}_{E},~\pi_E,$ and $\mathrm{ord}_{E}$
be the ring of integers in $E$, a prime element of $E$, and the normalized additive valuation 
on $E$ such that $\mathrm{ord}_E(\pi_E)=1$, respectively.
We put $\Lambda_{E}:=\mathcal{O}_{E}[[S]] $, the ring of formal power series over $\mathcal{O}_{E}$.
For a finitely generated torsion $\Lambda_{E}$-module $M$,
we denote the $\Lambda_E$-isomorphism class of $M$ by $[M]_E$ or simply by $[M]$.
\par
For a distinguished polynomial $f(S) \in \mathcal{O}_{E}[S] $,
we consider finitely generated torsion $\Lambda_E$-modules whose characteristic ideals are $(f(S))$,
and define the set $\mathcal{M}_{f(S)}^E$ by
\begin{equation*}
\mathcal{M}_{f(S)}^E=
\left \{
~[M]_E~
\vline
\begin{aligned}
&~M \textrm{~is~a~finitely~generated~torsion}~\Lambda_E \textrm{-module},\\
&~\mathrm{char}(M)=(f(S))\mathrm{~and~} M \mathrm{~is~free~over~} \mathcal{O}_E~
\end{aligned}
\right\}. \label{Mft}
\end{equation*}
Let $\overline{E}$ be a splitting field of $f(S)$.
Sumida and Koike considered the case of degree $2$.
In other words, 
there are some elements $\alpha$ and $\beta$ of $\overline{E}$ such that
\begin{eqnarray*}
f(S)=(S-\alpha )(S-\beta ).
\end{eqnarray*}
They classified all the elements of $\mathcal{M}_{f(S)}^E $ in \cite{Ko} and \cite{Su}.
Let us introduce their results in the following.
There are three cases to consider. 
$$
\begin{cases}
~~(\mathrm{i})~ &\textrm{The polynomial~} f(S) ~\textrm{is separable and reducible over } E.\\
~~(\mathrm{ii})~&\textrm{The polynomial~}f(S) ~\textrm{is irreducible over } E. \\
~~(\mathrm{iii})~ &\textrm{The polynomial~} f(S) ~\textrm{is inseparable}.
\end{cases}
$$
First, we consider the case of (i).
Let $f(S)$ be a separable and reducible polynomial.
In other words, we assume that
\begin{eqnarray*}
f(S)=(S-\alpha )(S-\beta ),
\end{eqnarray*}
where $\alpha$ and $\beta$ are distinct elements of $\pi_E \mathcal{O}_E$.
Let $[M]_E $ be an element of $\mathcal{M}_{f(S)}^E$.
Since $M$ has no non-trivial finite $\Lambda_E$-submodule,
there exists an injective $\Lambda_E$-homomorphism
\[
\varphi:M \hookrightarrow \Lambda_E /(S-\alpha ) \oplus \Lambda_E/(S-\beta )
\]
with finite cokernel.
We fix the notation to express such submodules in $\Lambda_E/(S-\alpha ) \oplus \Lambda_E/(S-\beta )$.
By using the canonical isomorphism 
$\Lambda_E/(S-\alpha) \cong \mathcal{O}_E  ~~(g(S)\mapsto  g(\alpha))$,
we define an isomorphism
\[
\iota :\mathcal{E}=\Lambda_E/ (S- \alpha ) \oplus \Lambda_E / (S-\beta ) \longrightarrow \mathcal{O}_E^{\oplus 2}
\]
by $(g_1(S),g_2(S)) \mapsto (g_1(\alpha),g_2(\beta))$. 
We identify $\mathcal{E}$ with $\mathcal{O}_E^{\oplus 2}$ via $\iota$.
Thus an element in $\mathcal{E}$ is expressed as $(a_1,a_2) \in \mathcal{O}_E^{\oplus 2 }$.
Since the rank of $M$ is equal to two, we can write $M$ of the form
\[
M=
\langle
(a,b),(c,d)
\rangle_{\mathcal{O}_E}~ \subset~ \Lambda_E/(S-\alpha ) \oplus \Lambda_E/(S-\beta ),
\]
where $\langle * \rangle _{\mathcal{O}_E}$ is the $\mathcal{O}_E$-submodule generated by $*$.
Furthermore, using this notation, we can express the action of $S$ by
\[
S(a,b)=(\alpha a,\beta b).
\]
\begin{Rem}
\rm{
The module 
$M =
\langle
(a,b),(c,d)
\rangle_{\mathcal{O}_E}
$
is, in fact,
an $\Lambda_{E}$-module
(see \cite[Lemma 2.1 (i)]{Ko}).
}
\end{Rem}
Then Sumida proved the following
\begin{prop}{\rm (Sumida {\cite[Proposition $10$]{Su}})} \label{Sumida re}
Let $f(S)$ be the same polynomial as above. Then we have
$$
\mathcal{M}_{f(S)}^E=\{ 
[M(k)]_E
~|~ 0 \leq k \leq \mathrm{ord}_E(\beta - \alpha )\},
$$
where
$$
M(k)=
\langle
(1,1), (0, \pi_E^k)
\rangle_{\mathcal{O}_E} \subset \Lambda_E/(S-\alpha ) \oplus \Lambda_E/(S-\beta ).
$$
Furthermore, we have
$$
M (k) \cong M (k') \Leftrightarrow k=k'.
$$
\end{prop}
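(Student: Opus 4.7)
The plan is to establish the proposition in two stages: existence (every class in $\mathcal{M}_{f(S)}^E$ is represented by some $M(k)$) and uniqueness ($k$ is determined by the class). For existence I would start from an injective $\Lambda_E$-map $\varphi : M \hookrightarrow \mathcal{E}$ with finite cokernel. The crucial observation is that because $\alpha \neq \beta$, one has $\mathrm{Hom}_{\Lambda_E}(\Lambda_E/(S-\alpha), \Lambda_E/(S-\beta)) = 0$, so every $\Lambda_E$-endomorphism of $\mathcal{E} \otimes_{\mathcal{O}_E} E$ is diagonal in the decomposition coming from $\iota$. Writing $p_i(\mathrm{Im}\,\varphi) = \pi_E^{s_i}\mathcal{O}_E$ for $i=1,2$, post-composing $\varphi$ with the $\Lambda_E$-linear scaling by $(\pi_E^{-s_1}, \pi_E^{-s_2}) \in (E^\times)^2$ yields another $\Lambda_E$-embedding $M \hookrightarrow \mathcal{E}$ for which both projections $p_i$ to $\mathcal{O}_E$ are now surjective.

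Under this normalization I pick $v_1 = (1,b) \in M$ with $p_1(v_1) = 1$ and a generator $v_2 = (0,\pi_E^k)$ of $\ker(p_1|_M) = M \cap (0 \oplus \mathcal{O}_E)$; since $p_1(M) = \mathcal{O}_E$ is $\mathcal{O}_E$-free, the sequence splits and $\{v_1,v_2\}$ is an $\mathcal{O}_E$-basis of $M$. The surjectivity $p_2(M) = \mathcal{O}_E$ forces $\gcd(b,\pi_E^k)$ to be a unit, giving two cases: either $k = 0$ (whence $M = \mathcal{E} = M(0)$ already), or $b \in \mathcal{O}_E^\times$ with $k>0$; in the latter the $\Lambda_E$-automorphism $(1,b^{-1})$ of $\mathcal{E}$ carries $M$ onto $\langle (1,1),(0,\pi_E^k)\rangle = M(k)$. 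Finally, the bound $k \leq \mathrm{ord}_E(\beta-\alpha)$ is forced by $S$-stability: since $(S-\alpha)(1,1) = (0,\beta-\alpha)$ must belong to $M$, one needs $\pi_E^k \mid \beta - \alpha$.

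For uniqueness I would compute the $\mathcal{O}_E$-module $M(k)/(S-\alpha)M(k)$ as a $\Lambda_E$-invariant. With $e_1 = (1,1)$ and $e_2 = (0,\pi_E^k)$, a direct calculation gives $(S-\alpha)e_1 = ((\beta-\alpha)/\pi_E^k)e_2$ and $(S-\alpha)e_2 = (\beta-\alpha)e_2$, so $(S-\alpha)M(k) = ((\beta-\alpha)/\pi_E^k)\mathcal{O}_E\cdot e_2$ and therefore
$$M(k)/(S-\alpha)M(k) \cong \mathcal{O}_E \oplus \mathcal{O}_E/(\pi_E^{\mathrm{ord}_E(\beta-\alpha)-k}).$$
The $\mathcal{O}_E$-length of its torsion submodule equals $\mathrm{ord}_E(\beta-\alpha)-k$, which is an invariant of the $\Lambda_E$-isomorphism class of $M(k)$ and hence determines $k$.

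The main obstacle I anticipate is the initial normalization step: one must carefully justify that rescaling by a suitable $(c_1,c_2) \in (E^\times)^2$ not only preserves the $\Lambda_E$-structure but also lands the image back inside $\mathcal{E}$ while simultaneously achieving both surjectivity conditions $p_i(M) = \mathcal{O}_E$. Once this is done the remainder is elementary linear algebra over the DVR $\mathcal{O}_E$, with the $S$-stability constraint entering only to produce the upper bound on $k$.
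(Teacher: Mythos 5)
Your proof is correct, but note that the paper itself contains no proof of this proposition --- it is imported verbatim from Sumida \cite[Proposition 10]{Su} --- so the right comparison is with the machinery the paper sets up around it, namely Kurihara's Lemma \ref{kuri lem} and Corollary \ref{kuri lem rem}. Your existence argument is sound: the diagonal rescaling $(\pi_E^{-s_1},\pi_E^{-s_2})$ is $\Lambda_E$-linear precisely because $S$ acts coordinatewise as $(\alpha,\beta)$, and its image lands back inside $\mathcal{E}$ by the very definition of $s_i$ via $p_i(\varphi(M))=\pi_E^{s_i}\mathcal{O}_E$, so the ``main obstacle'' you flag dissolves immediately; the splitting $M=\mathcal{O}_E(1,b)\oplus\mathcal{O}_E(0,\pi_E^k)$ and the unit absorption by the automorphism $(1,b^{-1})$ are routine over the DVR $\mathcal{O}_E$, and $S$-stability forces $k\le\mathrm{ord}_E(\beta-\alpha)$ exactly as you say. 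Your uniqueness invariant --- the $\mathcal{O}_E$-torsion length $\mathrm{ord}_E(\beta-\alpha)-k$ of $M(k)/(S-\alpha)M(k)$ --- is precisely the specialization at $S=\alpha$ of $\mathrm{Fitt}_{1,\Lambda_E}(M(k))=(S-\alpha,(\beta-\alpha)\pi_E^{-k})$ computed in Corollary \ref{kuri lem rem}, equivalently the exponent $i=\mathrm{ord}_E(\beta-\alpha)-k$ in Kurihara's presentation matrix of Lemma \ref{kuri lem}; so where the paper distinguishes the classes by higher Fitting ideals, you distinguish them by an elementary coinvariant computation, and the two are the same invariant in different clothing. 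Your route buys self-containedness (pure linear algebra over $\mathcal{O}_E$, no Fitting-ideal theory), while the Fitting-ideal formulation buys computability from class-group data, which is what \S 4 of the paper actually needs. The only point left implicit in your write-up is the easy reverse inclusion, that each $M(k)$ with $0\le k\le\mathrm{ord}_E(\beta-\alpha)$ genuinely lies in $\mathcal{M}_{f(S)}^E$: your own relation $(S-\alpha)(1,1)=\bigl((\beta-\alpha)\pi_E^{-k}\bigr)(0,\pi_E^k)$ shows $M(k)$ is $S$-stable exactly in this range (this is the content of \cite[Lemma 2.1 (i)]{Ko}, which the paper cites), and finiteness of the index of $M(k)$ in $\mathcal{E}$ gives $\mathcal{O}_E$-freeness and $\mathrm{char}(M(k))=(f(S))$; you should say this in one line to complete the equality of sets.
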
~\par
Next, we consider the case of (ii).
Let $f(S)$ be an irreducible polynomial.
We put
\begin{eqnarray*}
f(S)=S^2+c_1 S +c_0 \in \mathcal{O}_E[S].
\end{eqnarray*}
By the same method as in the case of $\mathrm{(}$i$\mathrm{)}$,
we identify $M$ with the submodule of finite index in $\Lambda_E /(f(S))$ via
an injective $\Lambda_E$-homomorphism
\[
\varphi:M \hookrightarrow \Lambda_E /(f(S)).
\]
Then 
we can write $M$ of the form
\[
M=
\langle
a S+b,c S+d
\rangle_{\mathcal{O}_E}~ \subset~ \Lambda_E / ( f(S) ),
\]
where $a,b,c,$ and $d$ are elements of $\mathcal{O}_E$.
Furthermore, using this notation, we can express the action of $S$ by
\[
S(a S +b ,c S +d )=((b-ac_1)S -a c_0, (d-cc_1)S-cc_0).
\] 
\begin{Rem}
\rm{
The module 
$
M=
\langle
a S+b , c S+d
\rangle_{\mathcal{O}_E}
$,
in fact,
is an ${\Lambda_E}$-module
(see \cite[Lemma 2.1 (ii)]{Ko}).
}
\end{Rem}
Then Koike proved the following
\begin{thm}{\rm (Koike \cite[Theorem 2.1]{Ko})}
Let $f(S)$ be the same polynomial as above.
Then we have
\begin{eqnarray*}
{\mathcal{M}}^E_{f(S)}=\left\{ [N_{x}]_E~\vline~N_{x}=\left\langle S+\frac{c_1}{2},
\pi_E^x \right\rangle_{\mathcal{O}_E} , 0 \leq  x \leq \frac{1}{2}\mathrm{ord}_E(c_1^2-4c_0) \right\}.
\end{eqnarray*}
\end{thm}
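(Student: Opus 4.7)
The plan is to classify $\mathcal{M}^E_{f(S)}$ by embedding every class into $\Lambda_E/(f(S))$ and then working modulo the multiplicative action of $F^{*} := (E[S]/(f(S)))^{*}$. Since $M$ is $\mathcal{O}_E$-free of rank $2$ and thus has no nonzero finite $\Lambda_E$-submodule, the structure theorem produces an injection $\varphi\colon M \hookrightarrow \Lambda_E/(f(S))$ with finite cokernel. Because $f$ is irreducible, $F := E[S]/(f(S))$ is a field of degree $2$ over $E$, and any $\Lambda_E$-linear isomorphism between two full-rank $\Lambda_E$-stable $\mathcal{O}_E$-sublattices of $\Lambda_E/(f(S))$ extends $E$-linearly to an $F$-linear automorphism of $F$, hence is multiplication by some $\gamma \in F^{*}$. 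Thus the classification amounts to listing $F^{*}$-orbits of such sublattices of $\mathcal{O}_E[\alpha] = \Lambda_E/(f(S))$, where $\alpha$ denotes the image of $S$.

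First I would put $M$ into a normal form. Writing $\Lambda_E/(f(S)) = \mathcal{O}_E \oplus \mathcal{O}_E S$ and applying Hermite normal form over the DVR $\mathcal{O}_E$, together with the $\Lambda_E$-stability $S\cdot M \subseteq M$, $M$ admits generators $\pi_E^y(S+v)$ and $\pi_E^x$ for some integers $x \geq y \geq 0$ and $v \in \mathcal{O}_E$. Scaling by $\pi_E^{-y} \in F^{*}$ (a $\Lambda_E$-isomorphism) reduces to $M \cong \langle S+v,\, \pi_E^{x-y}\rangle_{\mathcal{O}_E}$; after relabeling, I assume $M = \langle S+v,\, \pi_E^x\rangle_{\mathcal{O}_E}$, and the stability condition $S(S+v)\in M$ translates, via $S(S+v)=(v-c_1)(S+v)-(v^2-c_1v+c_0)$, to $v^2-c_1v+c_0 \in \pi_E^x \mathcal{O}_E$. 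Setting $\theta := S+c_1/2$ (so that $\theta^2=(c_1^2-4c_0)/4 \in E$) and $w := v - c_1/2 \in \mathcal{O}_E$, the condition becomes $w^2 \equiv \theta^2 \pmod{\pi_E^x}$. I then construct an explicit $\gamma \in F^{*}$ — the natural candidate being $\gamma = \theta/(\theta+w)$ or a close variant — such that $\gamma \cdot M = \langle \theta,\, \pi_E^x\rangle = N_x$ inside $\mathcal{O}_E[\alpha]$, showing that every class is represented by some $N_x$.

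For the range of $x$ and pairwise non-isomorphism, I view $N_x$ as a fractional ideal in $F$: scaling by $\pi_E^{-x}$ identifies $N_x$ with $\mathcal{O}_E + \pi_E^{-x}\theta\,\mathcal{O}_E = \mathcal{O}_E[\pi_E^{-x}\theta]$, which is an $\mathcal{O}_E$-order in $F$ precisely when $\pi_E^{-x}\theta$ is integral, equivalently $2x \leq \mathrm{ord}_E(\theta^2) = \mathrm{ord}_E(c_1^2-4c_0)$, giving the stated upper bound. Distinct $x$ in the permitted range yield genuinely different orders (their indices in the maximal order $\mathcal{O}_F$ strictly decrease as $x$ grows), and two distinct $\mathcal{O}_E$-orders in $F$ cannot be $F^{*}$-multiples of each other: if $\gamma\mathcal{O}_1 = \mathcal{O}_2$ with $1 \in \mathcal{O}_1\cap\mathcal{O}_2$, then $\gamma,\gamma^{-1}$ both lie in $\mathcal{O}_1$, so $\gamma \in \mathcal{O}_1^{*}$ and $\mathcal{O}_1=\mathcal{O}_2$. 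The main obstacle is the middle step — constructing the explicit $\gamma$ and verifying both $\gamma M \subseteq N_x$ and $N_x \subseteq \gamma M$; the argument has to branch on whether $F/E$ is ramified and on $\mathrm{ord}_E(w)$ compared to $\mathrm{ord}_E(\theta)$, but the stability congruence $w^2 \equiv \theta^2 \pmod{\pi_E^x}$ supplies the Hensel-type input needed to make each case work.
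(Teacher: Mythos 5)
The paper itself gives no proof of this statement: it is quoted directly from Koike \cite[Theorem 2.1]{Ko}, just as the reducible case is quoted from Sumida, so there is no in-paper argument to compare you with and I can only assess your outline on its own terms. Its architecture is largely sound: since $M$ is $\mathcal{O}_E$-free it embeds in $\Lambda_E/(f(S))$ with finite cokernel; $F=E[S]/(f(S))$ is a quadratic field and every $\Lambda_E$-isomorphism between full lattices in $F$ is multiplication by some $\gamma\in F^{*}$; your normal form $M=\langle \theta+w,\ \pi_E^x\rangle_{\mathcal{O}_E}$ (with $\theta=S+c_1/2$, $w=v-c_1/2$, using stability to divide out $\pi_E^y$) and the stability congruence $w^2\equiv\theta^2 \bmod \pi_E^x$ are correct; and the order-theoretic endgame (the bound $2x\leq \mathrm{ord}_E(c_1^2-4c_0)$ is exactly the condition for $\mathcal{O}_E[\pi_E^{-x}\theta]$ to be an order, and two distinct orders are never $F^{*}$-multiples of one another) is fine.

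The genuine gap is the step you yourself flag as the main obstacle, and it is worse than unexecuted: the claim that some $\gamma$ (for instance $\theta/(\theta+w)$) satisfies $\gamma M=N_x$ \emph{with the same} $x$ is false. Take $E=\mathbb{Q}_p$, $f(S)=S^2-p^4u$ with $u$ a non-square unit, so $\theta=S$ and the admissible range is $0\leq x\leq 2$; let $M=\langle \theta+p,\ p^2\rangle_{\mathcal{O}_E}$, which satisfies your congruence since $\mathrm{ord}_E(p^2-\theta^2)=2$. Then $\theta(\theta+p)=p(\theta+p)+p^2(p^2u-1)$ with $p^2u-1$ a unit, so $M=(\theta+p)\cdot\mathcal{O}_E[\theta]\cong N_0$; by your own non-isomorphism argument $N_0\not\cong N_2$, hence no $\gamma$ whatsoever carries $M$ to $N_2$. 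In general one finds $[M]=[N_{x'}]$ with $x'=\min\{x,\ \mathrm{ord}_E(w),\ \mathrm{ord}_E(w^2-\theta^2)-x\}$, which is also what forces the classification parameter into the range $x'\leq \tfrac{1}{2}\mathrm{ord}_E(c_1^2-4c_0)$ even though the lattice parameter $x$ need not lie there --- your sketch conflates these two parameters. The clean repair uses machinery you already invoke for distinctness: compute the multiplier ring $\{\xi\in F \mid \xi M\subseteq M\}=\mathcal{O}_E+\pi_E^{-x'}\theta\,\mathcal{O}_E$ directly from the two generators, and use that quadratic orders over a complete discrete valuation ring are Gorenstein and local, so a lattice is free of rank one over its multiplier ring; this gives $M\cong \mathcal{O}_E+\pi_E^{-x'}\theta\,\mathcal{O}_E\cong N_{x'}$ with no case-by-case construction of $\gamma$ and no Hensel-type branching at all.
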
~\par
Koike also determined all the elements of ${\mathcal{M}}^E_f(S)$ in the case of (iii) (\cite[Theorem 2.1]{Ko}).
\begin{Rem}
\rm{
We note that there is an isomorphism of $\Lambda_E$-modules
\begin{eqnarray*}
M(k) \cong N_{\mathrm{ord}_E(\beta -\alpha )-k} \subset \Lambda_{E}/(S-\alpha )(S-\beta )
\end{eqnarray*}
by \cite[Theorem 2.1]{Ko}.
}
\end{Rem}
To compute $k$ in Proposition \ref{Sumida re},
we introduce the notion of the higher Fitting ideals and state
relationships between $\Lambda_E$-modules and their higher Fitting ideals.
For a commutative ring $R$ and a finitely presented $R$-module $M$, we consider the following exact sequence
\[
R^m \stackrel f \to R^n \rightarrow M \rightarrow 0,
\]
where $m$ and $n$ are positive integers. For an integer $i \geq 0$ such that
$0 \leq i < n$, the $i$-th Fitting ideal of $M$ is defined to be the ideal of
$R$ generated by all $(n-i) \times (n-i)$ minors of the matrix corresponding to $f$.
We denote the $i$-th Fitting ideal of $M$ by $\mathrm{Fitt}_{i,R} (M)$.
This definition does not depend on the choice of the exact sequence above (see \cite{No}).
The following lemma says that the isomorphism class of a finitely
generated torsion $\Lambda_E$-module $M$ with $\mathrm{rank}_{\mathcal{O}_E}(M) = 2$ is determined by the Fitting
ideals $\mathrm{Fitt}_{0,\Lambda_E}(M)$ and $\mathrm{Fitt}_{1,\Lambda_E}(M)$.
\begin{lem}{\rm (Kurihara \cite[Lemma 9.1]{Ku})}\label{kuri lem}
Put $f(S)=(S-\alpha )(S-\beta ) \in \mathcal{O}_E[S]$. Let $[M] $ be an element of $\mathcal{M}_{f(S)}^E$.
Suppose that $\alpha$ and $\beta$ belong to $\mathcal{O}_E$. Then we have an exact sequence of $\Lambda$-modules
\[
0 \rightarrow \Lambda_E^2 \stackrel h \to \Lambda_E^2 \rightarrow M \rightarrow 0
\]
such that the matrix $A_{h}$ corresponding to the $\Lambda_E$-homomorphism $h$ is of the form
\begin{eqnarray*}
A_{h}=\left(
\begin{array}{cc}
S-\alpha & \pi^i_E \\
0 &S-\beta 
\end{array}
\right)
\end{eqnarray*}
for some $i$ with $0<i \leq \mathrm{ord}_E(\beta -\alpha )$.
Here if $\alpha = \beta$, $i= \infty$ is allowed.
Further, the isomorphism class of $M$ is determined by the value $i$.
\end{lem}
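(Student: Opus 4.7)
The plan is to reduce, via Sumida's normal form from Proposition \ref{Sumida re}, to a direct computation of a presentation for the model module $M(k)$. By Proposition \ref{Sumida re}, any $[M]_E \in \mathcal{M}_{f(S)}^E$ is isomorphic to
$$
M(k) = \langle (1,1),\, (0,\pi_E^k) \rangle_{\mathcal{O}_E} \subset \Lambda_E/(S-\alpha) \oplus \Lambda_E/(S-\beta)
$$
for a unique $k$ with $0 \le k \le \mathrm{ord}_E(\beta-\alpha)$; the degenerate case $\alpha=\beta$ (corresponding to $i=\infty$) would be treated in parallel via the inseparable part of Koike's classification \cite[Theorem 2.1]{Ko}.

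Next I would compute the relation module for $M(k)$ directly. Writing $v_1 := (1,1)$ and $v_2 := (0, \pi_E^k)$, and using the $S$-action $Sv_1 = \alpha v_1 + c v_2$, $Sv_2 = \beta v_2$ with $c := (\beta-\alpha)\pi_E^{-k}$ recorded in Section \ref{Sumida's and Koike's results}, the natural surjection $\Lambda_E^2 \twoheadrightarrow M(k)$ sends $(g_1, g_2) \mapsto (g_1(\alpha),\, g_1(\beta) + g_2(\beta)\pi_E^k)$. This vanishes precisely when $(S-\alpha) \mid g_1$ in $\Lambda_E$ and, writing $g_1 = (S-\alpha)h$, one has $g_2 + ch \equiv 0 \pmod{S-\beta}$. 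Thus the kernel is generated as a $\Lambda_E$-module by $(S-\alpha, -c)$ and $(0, S-\beta)$, yielding the presentation matrix
$$
\begin{pmatrix} S-\alpha & 0 \\ -c & S-\beta \end{pmatrix},
\qquad \mathrm{ord}_E(c) = \mathrm{ord}_E(\beta-\alpha) - k =: i.
$$

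Finally, elementary row and column operations — which preserve the isomorphism class of the cokernel — convert this matrix into the desired shape $\begin{pmatrix} S-\alpha & \pi_E^i \\ 0 & S-\beta \end{pmatrix}$: writing $c = u\pi_E^i$ with $u \in \mathcal{O}_E^\times$, one swaps rows and columns to move the unit multiple of $\pi_E^i$ into position $(1,2)$ and then rescales rows by units to clear $u$, invoking the symmetry of $f$ in $\alpha$ and $\beta$ if needed to match the diagonal. For the assertion that $[M]$ is determined by $i$, one can either combine Sumida's bijection $k \leftrightarrow [M(k)]$ with $i = \mathrm{ord}_E(\beta-\alpha) - k$, or more intrinsically observe that $\mathrm{Fitt}_{1,\Lambda_E}(M) = (S-\alpha, \pi_E^i)$ is a module invariant from which $i$ can be recovered. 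The main obstacle I expect is bookkeeping: maintaining consistency of the roles of $\alpha$ and $\beta$ through the row/column operations, and handling the boundary cases ($\alpha=\beta$, and $k=\mathrm{ord}_E(\beta-\alpha)$ at which $M$ becomes $\Lambda_E$-cyclic) cleanly in view of the strict bound $0 < i$ in the statement.
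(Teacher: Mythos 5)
There is no in-paper proof to compare against: the paper imports this lemma verbatim from Kurihara \cite[Lemma 9.1]{Ku}. Judged on its own, your reconstruction is correct in the main case $\alpha\neq\beta$ and follows the natural route; indeed the computation at its core --- $S(1,1)=\alpha(1,1)+(\beta-\alpha)\pi_E^{-k}(0,\pi_E^k)$ and $S(0,\pi_E^k)=\beta(0,\pi_E^k)$ --- is precisely the computation the paper itself performs immediately afterwards in Corollary \ref{kuri lem rem} to read off $\mathrm{Fitt}_{0,\Lambda_E}$ and $\mathrm{Fitt}_{1,\Lambda_E}$ from this presentation. Your kernel computation is right: $(g_1,g_2)\mapsto(g_1(\alpha),\,g_1(\beta)+g_2(\beta)\pi_E^k)$ vanishes iff $g_1=(S-\alpha)h$ and $g_2\equiv -ch \pmod{S-\beta}$ with $c=(\beta-\alpha)\pi_E^{-k}$, so the kernel is generated by $(S-\alpha,-c)$ and $(0,S-\beta)$. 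Two small tightenings: exactness on the left (injectivity of $h$) should be stated --- it is immediate since the matrix has determinant $f(S)\neq 0$ over the domain $\Lambda_E$ --- and the $\alpha\leftrightarrow\beta$ swap you invoke is unnecessary: writing $c=u\pi_E^i$ with $u\in\mathcal{O}_E^{\times}$, a single column scaling by $-u^{-1}$ followed by a row scaling by $-u$ already yields $\bigl(\begin{smallmatrix} S-\alpha & \pi_E^i\\ 0 & S-\beta\end{smallmatrix}\bigr)$, and in any case the symmetry is genuine because $\langle(1,1),(\pi_E^k,0)\rangle_{\mathcal{O}_E}=\langle(1,1),(0,\pi_E^k)\rangle_{\mathcal{O}_E}$.

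The boundary tension you flagged is real, not a bookkeeping artifact of your argument. Your identification $i=\mathrm{ord}_E(\beta-\alpha)-k$ shows that the $\Lambda_E$-cyclic class $k=\mathrm{ord}_E(\beta-\alpha)$ (i.e.\ $M\cong\Lambda_E/(f(S))$, which does lie in $\mathcal{M}_{f(S)}^E$) forces $i=0$, and no presentation with $i>0$ can produce it, since a cyclic $M$ has $\mathrm{Fitt}_{1,\Lambda_E}(M)=\Lambda_E$ whereas the displayed matrix gives $\mathrm{Fitt}_{1,\Lambda_E}=(S-\alpha,\pi_E^i)\subsetneq\Lambda_E$ for $i>0$. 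So the strict bound $0<i$ in the statement tacitly excludes the cyclic case (equivalently $k<\mathrm{ord}_E(\beta-\alpha)$); this is a feature of how the citation is phrased, and in the paper's application (Example \ref{example of 1.2(iii)(iv)}, where $k=0$ and $i=3$) it is harmless. Your two routes to uniqueness are both sound: Sumida's bijection $k\leftrightarrow[M(k)]$ from Proposition \ref{Sumida re}, or recovering $i$ intrinsically as the $\mathcal{O}_E$-length of $\Lambda_E/\mathrm{Fitt}_{1,\Lambda_E}(M)\cong\mathcal{O}_E/\pi_E^i\mathcal{O}_E$. The only incomplete piece is the inseparable case $\alpha=\beta$ (where $i=\infty$ is allowed), which you defer to Koike's classification without carrying it out; to match the level of detail of the rest of your argument you would need to run the analogous kernel computation for the modules in that part of \cite[Theorem 2.1]{Ko}.
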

\begin{cor}\label{kuri lem rem}
We suppose that the same assumption in Lemma \ref{kuri lem} holds.
We also assume that $\alpha \neq \beta$.
Let $M$ be a $\Lambda_E$-module satisfying $[M] \in \mathcal{M}_{f(S)}^E$
and $[M] = [M(k)]$ for some non-negative integer $k$ 
with $0 \leq k \leq \mathrm{ord}_E(\beta - \alpha)$.
Then we have 
\[
\mathrm{Fitt}_{0,\Lambda_E}(M(k)) =((S-\alpha )(S-\beta )),~ \mathrm{Fitt}_{1,\Lambda_E}(M(k))=(S-\alpha, (\beta - \alpha)\pi_E^{-k}).
\]
\begin{proof}
We have
\begin{eqnarray*}
S(1,1) &=& (\alpha, \beta)\\
        &=& \alpha (1,1) +(\beta - \alpha ) \pi_E^{-k}(0,\pi_E^{k}),\\
S(0,\pi_E^{k}) &=& \beta (0, \pi_E^{k}).
\end{eqnarray*}
Thus we get the conclusion.
\end{proof}
\end{cor}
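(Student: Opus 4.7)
The plan is to exhibit an explicit free $\Lambda_E$-presentation of $M(k)$ built directly from the generators $(1,1)$ and $(0,\pi_E^k)$, and then read both Fitting ideals off as the determinant and the ideal of entries of the resulting $2\times 2$ relation matrix.

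First, I would compute the $S$-action on the generators. Writing $e_1=(1,1)$ and $e_2=(0,\pi_E^k)$ and using $S(a,b)=(\alpha a,\beta b)$, one finds
\[
S\,e_1=(\alpha,\beta)=\alpha\,e_1+(\beta-\alpha)\pi_E^{-k}\,e_2,
\qquad
S\,e_2=\beta\,e_2,
\]
where $(\beta-\alpha)\pi_E^{-k}\in\mathcal{O}_E$ because $k\le\mathrm{ord}_E(\beta-\alpha)$. This produces the two obvious relations on $e_1,e_2$, which I would record as the columns of
\[
A_h=\begin{pmatrix} S-\alpha & 0\\ -(\beta-\alpha)\pi_E^{-k} & S-\beta\end{pmatrix},
\]
giving a candidate presentation $\Lambda_E^2\xrightarrow{A_h}\Lambda_E^2\to M(k)\to 0$.

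Second, I need to verify that these two relations really generate the entire relation module. Let $N$ denote the cokernel of $A_h$. Its generators $\bar e_1,\bar e_2$ satisfy exactly the relations above, and these allow every $S^j\bar e_i$ to be rewritten as an $\mathcal{O}_E$-linear combination of $\bar e_1,\bar e_2$; hence $N$ is generated by two elements over $\mathcal{O}_E$. Since $M(k)$ is free of rank $2$ over $\mathcal{O}_E$, the canonical surjection $N\twoheadrightarrow M(k)$ must be an $\mathcal{O}_E$-isomorphism, hence a $\Lambda_E$-isomorphism. Alternatively, one could invoke Lemma~\ref{kuri lem} to know a priori that the presentation has the stated upper-triangular shape and merely match parameters.

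Once the presentation is secured, the conclusion is a short calculation: $\mathrm{Fitt}_{0,\Lambda_E}(M(k))=(\det A_h)=((S-\alpha)(S-\beta))$, while $\mathrm{Fitt}_{1,\Lambda_E}(M(k))$ is generated by the four entries of $A_h$. Since
\[
S-\beta=(S-\alpha)-(\beta-\alpha)=(S-\alpha)-\pi_E^k\cdot(\beta-\alpha)\pi_E^{-k},
\]
the entry $S-\beta$ is redundant, and we obtain $\mathrm{Fitt}_{1,\Lambda_E}(M(k))=(S-\alpha,(\beta-\alpha)\pi_E^{-k})$. The only non-mechanical point is the exactness of the presentation in the second step; everything else is bookkeeping with minors.
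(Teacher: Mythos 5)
Your proposal is correct and follows essentially the same route as the paper: the paper's proof consists of exactly your first computation, namely the $S$-action identities $S(1,1)=\alpha(1,1)+(\beta-\alpha)\pi_E^{-k}(0,\pi_E^k)$ and $S(0,\pi_E^k)=\beta(0,\pi_E^k)$, from which both Fitting ideals are read off the resulting triangular relation matrix. The only difference is that you explicitly verify exactness of the presentation via the $\mathcal{O}_E$-rank argument, a step the paper leaves implicit under the standing hypotheses of Lemma \ref{kuri lem}; that verification is sound.
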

\section{examples of Theorem \ref{main thm of classification lambda=2}}
In this section, we give examples of Theorem \ref{main thm of classification lambda=2}.
We use the same notation as in the previous section
and suppose that the assumption in Theorem \ref{main thm of classification lambda=2} holds.\par
\subsection{Setting}

We put $\Lambda=\mathbb{Z}_p[[S]]$ and
$K=\mathbb{Q}(\sqrt{-d})$, where $d$ is a positive square-free integer.
For each $n\geq 0 $, we denote by $K_n^{{\rm c}}$ the intermediate field of the cyclotomic $\mathbb{Z}_p$-extension $K_\infty^{{\rm c}}$
such that $K_n^{{\rm c}}$ is the unique cyclic extension  over $K$ of degree $p^n$.
Let $A_{K_n^{{\rm c}}}$ be the $p$-Sylow subgroup of the ideal class group of $K_n^{{\rm c}}$.
Then, by class field theory, we have
$\displaystyle{X_{K_\infty^{{\rm c}}}
\cong
\underleftarrow{\lim}  A_{K_n^{{\rm c}}}}$, where the inverse limit is
taken with respect to the relative norms. 
As in \S \ref{Introduction}, 
$X_{K_\infty^{{\rm c}}}$ is a finitely generated torsion $\Lambda$-module
via an fixed isomorphism 
\begin{eqnarray}\label{isom ring}
\mathbb{Z}_p[[ \mathrm{Gal}(K_\infty^{{\rm c}}/K)]]  \cong \mathbb{Z}_p[[S]] \qquad  (\sigma \leftrightarrow 1+S),
\end{eqnarray}
where $\sigma$ is a topological generator of $\mathrm{Gal}(K_\infty^{{\rm c}}/K)$.
Let $f(S)$ be the distinguished polynomial which generates char$(X_{K_\infty^{{\rm c}}})$.
Since it is known that $X_{K_\infty^{{\rm c}}}$ is a free $\mathbb{Z}_p$-module,
we have $[X_{K_\infty^{{\rm c}}}]_{\mathbb{Q}_p} \in \mathcal{M}_{f(S)}^{\mathbb{Q}_p}$.
We can calculate the polynomial $f(S)$ mod $p^n$ for small $n$ numerically.
We can compute $f(S)$ by PARI/GP 
\cite{PARI/GP}
or  by Mizusawa's program Iwapoly.ub
\cite[Research, Programing, Approximate Computation of Iwasawa Polynomials by UBASIC]{Mi}.\par
Let $E$ be the minimal splitting field of $f(S)$.
Note that $f(S)$ is separable by the assumption in Theorem \ref{main thm of classification lambda=2}.
As in the previous section, there exist an integer $k$ with $0 \leq  k \leq  {\mathrm{ord}}_{E} (\beta-\alpha)$,
which depends only on the isomorphism class of $X_{K_\infty^{{\rm c}}}$,
and an $\mathcal{O}_E$-basis $\mathbf{e}_1, \mathbf{e}_2$ of $X_{K_\infty^{{\rm c}}}\otimes_{\mathbb{Z}_p} \mathcal{O}_E$
such that the homomorphism on $\Lambda_E$-modules
\begin{eqnarray}\label{definition of k}
X_{K_\infty^{{\rm c}}}\otimes_{\mathbb{Z}_p} \mathcal{O}_E
\hookrightarrow
\Lambda_E/(S-\alpha) \oplus \Lambda_E/(S-\beta);
\mathbf{e}_1
\mapsto
(1,1),
\mathbf{e}_2
\mapsto
(0,\pi_E^{k})
\end{eqnarray}
is injective.
In the case of $k=0$, we have
$X_{K_\infty^{{\rm c}}}\otimes_{\mathbb{Z}_p} \mathcal{O}_E \cong \Lambda_E/(S-\alpha) \oplus \Lambda_E/(S-\beta)$.
In this case we use the standard basis $\{ (1,0), (0,1)\}$ instead of $\{ (1,1), (0,1)\}$.
We regard $X_{K_\infty^{{\rm c}}}\otimes_{\mathbb{Z}_p} \mathcal{O}_E$ 
as a $\Lambda_E$-submodule of $\Lambda_E/(S-\alpha) \oplus \Lambda_E/(S-\beta)$ by the above injection.
For convenience, we regard $X_{K_\infty^{{\rm c}}} \subset X_{K_\infty^{{\rm c}}}\otimes_{\mathbb{Z}_p} \mathcal{O}_E$
by the injection $x \mapsto  x \otimes 1$.
We can take generators $x_1$ and $x_2$ of $X_{K_\infty^{{\rm c}}}$ satisfying the following condition (CG)
(see Section $4$ in \cite{MOMO}):\\[3pt]~
{\bf condition of generators (CG)}
\begin{itemize}\label{condition of generators}
\item
$x_1$ and $x_2$ generate $X_{K_\infty^{{\rm c}}}$.
\item
The image of $x_1$ generates $\mathrm{Gal}(L_K \cap \widetilde{K}/K)$.
The image of $x_2$
becomes $0$ in $\mathrm{Gal}(L_K \cap \widetilde{K}/K)$.
\end{itemize}
We denote by $(\mu_{11}, \mu_{12})$ (resp. $(\mu_{21}, \mu_{22})$) the image of $x_1 \otimes 1$ (resp. $x_2 \otimes 1$)
under the map (\ref{definition of k}).
Then we can write
\begin{eqnarray*}
x_1 &=& \lambda_{11} \mathbf{e}_1 + \lambda_{12} \mathbf{e}_2=(\mu_{11},\mu_{12}),  \\
x_2 &=& \lambda_{21} \mathbf{e}_1 + \lambda_{22} \mathbf{e}_2=(\mu_{21},\mu_{22})
\end{eqnarray*}
for some $\lambda_{ij} \in \mathcal{O}_E$.
Note that $\lambda_{21}=\mu_{21}$ and that
$\lambda_{11}\lambda_{22}-\lambda_{21}\lambda_{22} \in \mathcal{O}_E^{\times}$.
Moreover, if $k=0$, then $\lambda_{ij}=\mu_{ij}$ $(i,j=1,2)$
since we take $\mathbf{e}_1=(1,0), \mathbf{e}_2=(0,1)$.
We will use the following lemma in the next subsection.
\begin{lem}\label{main lem}
With the same notation as above, we have the following:\\
$\mathrm{(i)}$ If $k=0$, we have
\begin{eqnarray*}
Sx_1 &=&
\frac{\alpha \lambda_{11}\lambda_{22} - \beta \lambda_{12}\lambda_{21}}{\det(\lambda_{ij})_{ij}}
x_1 +
\frac{(\beta - \alpha )\lambda_{11}\lambda_{12}}{\det(\lambda_{ij})_{ij}}
x_2,
\\
Sx_2 &=&
\frac{(\alpha-\beta) \lambda_{21}\lambda_{22}}{\det(\lambda_{ij})_{ij}}
x_1 +
\frac{-\alpha \lambda_{12}\lambda_{21}+\beta \lambda_{11}\lambda_{22} }{\det(\lambda_{ij})_{ij}}
x_2.
\end{eqnarray*}
$\mathrm{(ii)}$ If $k>0$, we have
\begin{eqnarray*}
Sx_1 &=&
\frac{\alpha \lambda_{11}\lambda_{22} - \beta \lambda_{12}\lambda_{21} -\lambda_{11}\lambda_{21} \gamma}{\det(\lambda_{ij})_{ij}}
x_1+
\frac{(\beta-\alpha) \lambda_{11} \lambda_{12} +\lambda_{11}^2 \gamma}{\det(\lambda_{ij})_{ij}}
x_2,
\\
Sx_2 &=&
\frac{(\alpha-\beta)\lambda_{21} \lambda_{22} -\lambda_{21}^2 \gamma}{\det(\lambda_{ij})_{ij}}
x_1+
\frac{-\alpha \lambda_{12} \lambda_{21} + \lambda_{11}\lambda_{22}\beta +\lambda_{11}\lambda_{21} \gamma }{\det(\lambda_{ij})_{ij}}
x_2,
\end{eqnarray*}
where $\gamma = (\alpha-\beta) \pi_E^{-k}$.
\end{lem}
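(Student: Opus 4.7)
The plan is to reduce the lemma to a direct linear-algebra computation using the explicit description of the $\Lambda_E$-module structure on $\Lambda_E/(S-\alpha)\oplus\Lambda_E/(S-\beta)$ established in \S\ref{Sumida's and Koike's results}, namely $S\cdot(a,b)=(\alpha a,\beta b)$.

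First I would compute the action of $S$ on the basis $\mathbf{e}_1,\mathbf{e}_2$. In case (i) ($k=0$) we have $\mathbf{e}_1=(1,0)$ and $\mathbf{e}_2=(0,1)$, so $S\mathbf{e}_1=\alpha\mathbf{e}_1$ and $S\mathbf{e}_2=\beta\mathbf{e}_2$. In case (ii) ($k>0$) we have $\mathbf{e}_1=(1,1)$ and $\mathbf{e}_2=(0,\pi_E^{k})$; thus $S\mathbf{e}_1=(\alpha,\beta)=\alpha(1,1)+(\beta-\alpha)\pi_E^{-k}(0,\pi_E^{k})=\alpha\mathbf{e}_1+\gamma\mathbf{e}_2$ and $S\mathbf{e}_2=(0,\beta\pi_E^{k})=\beta\mathbf{e}_2$. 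Note that $\gamma=(\alpha-\beta)\pi_E^{-k}$ lies in $\mathcal{O}_E$ precisely because $k\le\mathrm{ord}_E(\beta-\alpha)$, so the resulting coefficients remain in $\mathcal{O}_E$ and the formulas make sense.

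Second, using $x_i=\lambda_{i1}\mathbf{e}_1+\lambda_{i2}\mathbf{e}_2$ and linearity, I would expand $Sx_1$ and $Sx_2$ as $\mathcal{O}_E$-linear combinations of $\mathbf{e}_1,\mathbf{e}_2$. To convert these back into combinations of $x_1,x_2$ I would invert the change-of-basis matrix: setting $D=\det(\lambda_{ij})=\lambda_{11}\lambda_{22}-\lambda_{12}\lambda_{21}$ (which is a unit in $\mathcal{O}_E$ by the remark preceding the lemma, once the apparent typo $\lambda_{11}\lambda_{22}-\lambda_{21}\lambda_{22}$ is corrected), Cramer's rule gives
\begin{equation*}
\mathbf{e}_1=\tfrac{1}{D}(\lambda_{22}x_1-\lambda_{12}x_2),\qquad
\mathbf{e}_2=\tfrac{1}{D}(-\lambda_{21}x_1+\lambda_{11}x_2).
\end{equation*}
Substituting these into the expressions for $Sx_1,Sx_2$ and collecting the coefficients of $x_1,x_2$ produces exactly the formulas asserted in (i) and (ii).

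There is no real obstacle here: the proof is a bookkeeping exercise, and the only point requiring a short comment is that $D\in\mathcal{O}_E^\times$, so that the division by $D$ is legal inside $\mathcal{O}_E$. I would present case (i) first (where $S$ acts diagonally, so the arithmetic is cleanest), and then indicate that case (ii) proceeds identically with the additional off-diagonal term $\gamma\mathbf{e}_2$ arising in $S\mathbf{e}_1$; this single extra term accounts for every difference between the two sets of formulas.
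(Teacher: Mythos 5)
Your proof is essentially the paper's own: the authors likewise write $\bigl(\begin{smallmatrix}x_1\\ x_2\end{smallmatrix}\bigr)=(\lambda_{ij})\bigl(\begin{smallmatrix}\mathbf{e}_1\\ \mathbf{e}_2\end{smallmatrix}\bigr)$, observe that $S$ acts on $\mathbf{e}_1,\mathbf{e}_2$ through the upper-triangular matrix $\bigl(\begin{smallmatrix}\alpha & \gamma\delta\\ 0 & \beta\end{smallmatrix}\bigr)$ (with $\delta=0$ or $1$ according as $k=0$ or $k>0$), and conjugate by $(\lambda_{ij})$, which is exactly your Cramer's-rule substitution carried out in matrix form. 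One caveat: in your chain $S\mathbf{e}_1=\alpha(1,1)+(\beta-\alpha)\pi_E^{-k}(0,\pi_E^{k})=\alpha\mathbf{e}_1+\gamma\mathbf{e}_2$ the last equality should read $-\gamma$ under the lemma's convention $\gamma=(\alpha-\beta)\pi_E^{-k}$ (compare Corollary \ref{kuri lem rem}), a sign slip the paper's statement itself shares and which is harmless downstream, since only $\mathrm{ord}_E$ of these coefficients is ever used.
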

\begin{proof}
Put $\delta=0$ or $1$ according to whether $k=0$ or not.
Then we have
\begin{eqnarray*}
\begin{pmatrix}
x_1
\\
x_2
\end{pmatrix}
=
\begin{pmatrix}
\lambda_{11} & \lambda_{12}
\\
\lambda_{21} & \lambda_{22}
\end{pmatrix}
\begin{pmatrix}
\mathbf{e}_1
\\
\mathbf{e}_2
\end{pmatrix}
,\ \ \ \ 
S
\begin{pmatrix}
\mathbf{e}_1
\\
\mathbf{e}_2
\end{pmatrix}
=
\begin{pmatrix}
\alpha & \gamma\delta
\\
0 & \beta 
\end{pmatrix}
\begin{pmatrix}
\mathbf{e}_1
\\
\mathbf{e}_2
\end{pmatrix}.
\end{eqnarray*}
Therefore we have
\begin{eqnarray*}
S
\begin{pmatrix}
x_1
\\
x_2
\end{pmatrix}
=
\begin{pmatrix}
\lambda_{11} & \lambda_{12}
\\
\lambda_{21} & \lambda_{22}
\end{pmatrix}
\begin{pmatrix}
\alpha & \gamma\delta
\\
0 & \beta 
\end{pmatrix}
\begin{pmatrix}
\lambda_{11} & \lambda_{12}
\\
\lambda_{21} & \lambda_{22}
\end{pmatrix}^{-1}
\begin{pmatrix}
x_1
\\
x_2
\end{pmatrix}.
\end{eqnarray*}
We obtain the results from this equation.
\end{proof}
\subsection{A method of computing $\mu_{21}$ and $\mu_{22}$}\label{A method of computing mu_{21} and mu_{22}}

In this subsection, we give a method of computing $\mu_{21}$ and $\mu_{22}$
in Theorem \ref{main thm of classification lambda=2}.
We assume that
$A_{K} \cong \mathbb{Z}/p^{n_1}\mathbb{Z} \oplus \mathbb{Z}/p^{n_2}\mathbb{Z}$
for some non-negative integers $n_1, n_2$.
Then we have
$A_K \otimes_{\mathbb{Z}_p} \mathcal{O}_E \cong \mathcal{O}_E/\pi_E^{N_1}\mathcal{O}_E \oplus \mathcal{O}_E/\pi_E^{N_2}\mathcal{O}_E$,
where $N_i=en_i~(i=1,2)$ and $e$ is the ramification index in $E/\mathbb{Q}_p$ for $p$.
We also assume that we have a direct sum decomposition
\begin{eqnarray}\label{direct sum decomposition}
\mathrm{Gal}(L_K/K) \cong \mathrm{Gal}(L_K \cap \widetilde{K}/K) \oplus \mathrm{Gal}(L_K/L_K \cap \widetilde{K})
\end{eqnarray}
with the order of $\mathrm{Gal}(L_K \cap \widetilde{K}/K)$ is $p^{n_1}$.
Since $p$ does not split in $K$, we have $\Lambda$-isomorphisms
\begin{equation*}
\psi_{n} : X_{K_\infty^{{\rm c}}}/\omega_n(S) X_{K_\infty^{{\rm c}}} \stackrel\sim\to A_{K_n^{{\rm c}}}  \label{ideal}
\end{equation*}
for any non-negative integers $n$,
where $\omega_n(S)=(1+S)^{p^n}-1$ (see \cite[Proposition $13.22$]{Wa}).\par
Recall that the Iwasawa $\lambda$-invariant of $K_\infty^{{\rm c}}/K$ is $2$.
Hence $K_n^{{\rm c}}$ is generated by two elements.
Since the order of $\mathrm{Gal}(L_K \cap \widetilde{K}/K)$ is $p^{n_1}$, we have $L_K \cap \widetilde{K} = K_{n_1}^{\rm an}$,
where $K_{n_1}^{\rm an}$ is the $n_1$-th layer of the anti-cyclotomic $\mathbb{Z}_p$-extension $K_\infty^{\rm an}/K$.
Fix a non-negative integer ${n}$.
We take a basis $\{[\frak{b}_1], [\frak{b}_2] \}$ of $A_{K^{\rm c}_{{n}}}$ satisfying the following:\par
\begin{itemize}
\item[(i)]
$\displaystyle{[\frak{b}_1]=
s[\frak{Q}_1],
[\frak{b}_2]=
t[\frak{L}_1}]
$
for some non-negative integers $s$, $t$ and for some prime ideals $\frak{Q}_1, \frak{L}_1$.
Here we denote by $[*]$ the ideal class of $*$.\par
\item[(ii)]
$\frak{Q}_1, \frak{L}_1$ are prime ideals in $K_{{n}}^{{\rm c}}$ lying above primes $q, \ell$, respectively.\par
\item[(iii)]
$q$ and $\ell$ split completely in $K_{{n}}^{{\rm c}}/\mathbb{Q}$, respectively.
\end{itemize}
Let $\frak{q},\overline{\frak{q}}, \frak{l}$, and $\overline{\frak{l}}$ be prime ideals in $K$
such that $q\mathcal{O}_{K}=\frak{q}\overline{\frak{q}}$
and $\ell\mathcal{O}_{K}=\frak{l}\overline{\frak{l}}$.
We write
\begin{eqnarray*}
\begin{array}{llll}
q \mathcal{O}_{K_{{n}}^{{\rm c}}} = \frak{Q}_{1} \overline{\frak{Q}_{1}} \cdots \frak{Q}_{p^{{n}}} \overline{\frak{Q}_{p^{{n}}}},
&
\frak{Q}_i ~|~\frak{q},
&
\overline{\frak{Q}_i} ~|~ \overline{\frak{q}}
&
(1=1,\ldots,p^n),
\\[3pt]
\ell \mathcal{O}_{K_{{n}}^{{\rm c}}} = \frak{L}_{1} \overline{\frak{L}_{1}} \cdots \frak{L}_{p^{{n}}} \overline{\frak{L}_{p^{{n}}}}, 
&
\frak{L}_i ~|~\frak{l},
&
\overline{\frak{L}_i} ~|~ \overline{\frak{l}}
&
(1=1,\ldots,p^n),
\end{array}
\end{eqnarray*}
where $\frak{Q}_{i}, \overline{\frak{Q}_i}, \frak{L}_i,$ and $\overline{\frak{L}_{i}}$ are prime ideals in $\mathcal{O}_{K_{{n}}^{{\rm c}}}$.
Since the norm map
$N_{K_{{n}}^{{\rm c}}/K} : A_{K_{{n}}^{{\rm c}}} \rightarrow A_K$ is surjective, we have
\begin{eqnarray}\label{L_K/K}
\mathrm{Gal}(L_K / K)
= \left\langle \left( \frac{L_K/K}{\frak{q}} \right)^s,  \left( \frac{L_K/K}{\frak{l}}\right)^t \right\rangle,
\end{eqnarray}
where
$\displaystyle{\left( \frac{L_K/K}{\frak{q}} \right)}$,
$\displaystyle{\left( \frac{L_K/K}{\frak{l}} \right)}$ are the Frobenius endomorphism of $\frak{q}, \frak{l}$, respectively.
By our assumption (\ref{direct sum decomposition}), there exist non-negative integers $u,v$ such that $s \mid u$, $t \mid v$ and
\begin{eqnarray}\label{L_K/cap}
\mathrm{Gal}(L_K / L_K \cap \widetilde{K})
= \mathrm{Gal}(L_K / K_{n_1}^{{\rm an}})
= 
\left\langle \left( \frac{L_K/K}{\frak{q}} \right)^{u} \!\! \left( \frac{L_K/K}{\frak{l}}\right)^{v} \right\rangle.
\end{eqnarray}
Let $Q$ and $L$ be the fields corresponding to the subgroups generated by
$\displaystyle{\left( \frac{L_K/K}{\frak{q}} \right)^s}$, $\displaystyle{\left( \frac{L_K/K}{\frak{l}} \right)^t}$,
respectively.
Then we have $L_K=QK_{n_1}^{{\rm an}}$ or $L_K=LK_{n_1}^{{\rm an}}$.
We may assume that $L_K=QK_{n_1}^{{\rm an}}$.
Furthermore, we have the following commutative diagram:
$$
\begin{CD}
  X_{K_\infty^{{\rm c}}}   \\ 
 @V VV \\  
  X_{K_\infty^{{\rm c}}}/\omega_{{n}}(S) X_{K_\infty^{{\rm c}}}   @>>\psi_{{n}} > A_{K_{{n}}^{{\rm c}}} \\
 @V VV @VV{N_{K_{{n}}^{{\rm c}}/K}}V\\
  X_{K_\infty^{{\rm c}}}/S X_{K_\infty^{{\rm c}}}   @>>\psi_{0}>  A_{K}.
 \end{CD}
$$ 
Using the commutative diagram above, we obtain $x_1, x_2 \in X_{K_\infty^{{\rm c}}}$
such that
\[
\psi_{{n}}(x_1 \mathrm{~mod~} \omega_{{n}}(S)) = [s\frak{Q}_{1}],\quad
\psi_{{n}}(x_2 \mathrm{~mod~} \omega_{{n}}(S)) = [u\frak{Q}_1+v\frak{L}_1],
\]
where $[s\frak{Q}_{1}], [u\frak{Q}_1+v\frak{L}_1]$ are the ideal classes of
$s\frak{Q}_{1}$ and $u\frak{Q}_1+v\frak{L}_1$, respectively.
By Nakayama's lemma and our assumptions,
we obtain $X_{K_\infty^{{\rm c}}}=\langle x_1,x_2\rangle$ and
$A_{K_{{n}}^{{\rm c}}} = \langle [s\frak{Q}_1], [u\frak{Q}_1+v\frak{L}_1] \rangle$.
These $x_1$ and $x_2$ satisfy the condition (CG).\par
Because $\mathbb{Z}_p[\mathrm{Gal}(K_{{n}}^{{\rm c}}/K)] \cong \Lambda/\omega_{{n}}(S) \Lambda,$ we get
\begin{eqnarray}\label{overline{S} ([ufrak{Q}_1+vfrak{L}_1])}
\overline{S} ([u\frak{Q}_1+v\frak{L}_1]) = A[s\frak{Q}_{1}] + B[u\frak{Q}_1+v\frak{L}_1]
\end{eqnarray}
for some $A,B \in \mathbb{Z}_p$, where $\overline{S} = S\mathrm{~mod~}\omega_{{n}}(S)$.
Then we obtain the following theorem, which gives a method
of computing $\mu_{21}$ and $\mu_{22}$.
\begin{thm}\label{main thm}
\makeatletter
  \parsep   = 0pt
  \labelsep = .5pt
  \def\@listi{%
     \leftmargin = 20pt \rightmargin = 0pt
     \labelwidth\leftmargin \advance\labelwidth-\labelsep
     \topsep     = 0\baselineskip
     \partopsep  = 0pt \itemsep       = 0pt
     \itemindent = 0pt \listparindent = 10pt}
  \let\@listI\@listi
  \@listi
  \def\@listii{%
     \leftmargin = 20pt \rightmargin = 0pt
     \labelwidth\leftmargin \advance\labelwidth-\labelsep
     \topsep     = 0pt \partopsep     = 0pt \itemsep   = 0pt
     \itemindent = 0pt \listparindent = 10pt}
  \let\@listiii\@listii
  \let\@listiv\@listii
  \let\@listv\@listii
  \let\@listvi\@listii
  \makeatother
With the same notation as above, 
$\mathrm{ord}([s\frak{Q}_{1}])$ denotes the order of $[s\frak{Q}_{1}]$
in the ideal class group $A_{K_{{n}}^{{\rm c}}}$.
Then we have the following:
\begin{itemize}
\item[$(\mathrm{a})$] 
If $k=0$ 
and $\mathrm{ord}_E(A)={\mathrm{ord}_E(\beta-\alpha)}<\mathrm{ord}([s\frak{Q}_{1}])$,
then
$\mathrm{ord}_E(\mu_{21})=\mathrm{ord}_E(\mu_{22})=0$.
\item[$(\mathrm{b})$] 
If $k>0$ and 
$\mathrm{ord}_E(\beta - \alpha)-k=\mathrm{ord}_E(A) <\mathrm{ord}([s\frak{Q}_{1}])$, then $\mathrm{ord}_E(\mu_{21})=0$.
\end{itemize}
\end{thm}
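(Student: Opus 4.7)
My plan is to compare two expressions for $Sx_2 \in X_{K_\infty^{{\rm c}}} \otimes_{\mathbb{Z}_p} \mathcal{O}_E$. Since $\{x_1, x_2\}$ is an $\mathcal{O}_E$-basis, Lemma~\ref{main lem} provides an exact identity $Sx_2 = c_{21} x_1 + c_{22} x_2$ with $c_{21}, c_{22} \in \mathcal{O}_E$ given explicitly in terms of the $\lambda_{ij}$, $\alpha$, $\beta$ (and $\gamma = (\alpha-\beta)\pi_E^{-k}$ when $k>0$). On the other hand, the relation (\ref{overline{S} ([ufrak{Q}_1+vfrak{L}_1])}), transported via the isomorphism $\psi_{{n}}$, gives $Sx_2 \equiv Ax_1 + Bx_2 \pmod{\omega_{{n}}(S) X_{K_\infty^{{\rm c}}}}$. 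Subtracting and tensoring with $\mathcal{O}_E$ yields
\[
(c_{21} - A)\,x_1 + (c_{22} - B)\,x_2 \in \omega_{{n}}(S)\bigl(X_{K_\infty^{{\rm c}}} \otimes_{\mathbb{Z}_p} \mathcal{O}_E\bigr).
\]

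The heart of the proof is to upgrade this inclusion to the single equality $\mathrm{ord}_E(c_{21}) = \mathrm{ord}_E(A)$. Using the Sumida--Koike coordinates $X_{K_\infty^{{\rm c}}} \otimes_{\mathbb{Z}_p} \mathcal{O}_E \hookrightarrow \Lambda_E/(S-\alpha) \oplus \Lambda_E/(S-\beta)$, on which $\omega_{{n}}(S)$ acts componentwise by $(\omega_{{n}}(\alpha), \omega_{{n}}(\beta))$, the displayed inclusion unwinds to a pair of congruences in $\mathcal{O}_E$ modulo $\omega_{{n}}(\alpha)$ and $\omega_{{n}}(\beta)$. Because $x_1$ corresponds under $\psi_{{n}}$ to $[s\mathfrak{Q}_1]$, its image in $X_{K_\infty^{{\rm c}}}/\omega_{{n}}(S) X_{K_\infty^{{\rm c}}}$ has order $\mathrm{ord}([s\mathfrak{Q}_1])$, and the hypothesis $\mathrm{ord}_E(A) < \mathrm{ord}([s\mathfrak{Q}_1])$ places $A$ strictly below the annihilator valuation of this image. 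A careful bookkeeping --- separating the $x_1$- and $x_2$-contributions by passing to the quotient of $X_{K_\infty^{{\rm c}}}/\omega_{{n}}(S)$ by the $\Lambda_E$-submodule generated by $x_2$, and invoking condition (CG) together with the decomposition (\ref{direct sum decomposition}) to ensure that the image of $x_1$ retains its full order in this quotient --- then forces $\mathrm{ord}_E(c_{21} - A) > \mathrm{ord}_E(A)$, whence $\mathrm{ord}_E(c_{21}) = \mathrm{ord}_E(A)$.

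Given this equality, both parts follow from the explicit formulas of Lemma~\ref{main lem} together with $\det(\lambda_{ij}) \in \mathcal{O}_E^{\times}$. For case (a), where $k=0$ and $\lambda_{ij} = \mu_{ij}$, Lemma~\ref{main lem}(i) gives $c_{21} = (\alpha - \beta)\mu_{21}\mu_{22}/\det(\mu_{ij})$; applying $\mathrm{ord}_E$ and invoking $\mathrm{ord}_E(A) = \mathrm{ord}_E(\beta - \alpha)$ forces $\mathrm{ord}_E(\mu_{21}) + \mathrm{ord}_E(\mu_{22}) = 0$, so both summands vanish. For case (b), substituting $\lambda_{21} = \mu_{21}$ and $\lambda_{22}\pi_E^k = \mu_{22} - \mu_{21}$ (which come from $\mathbf{e}_1 = (1,1)$ and $\mathbf{e}_2 = (0,\pi_E^k)$) into the formula of Lemma~\ref{main lem}(ii) simplifies the coefficient to
\[
c_{21} = \frac{(\alpha - \beta)\,\mu_{21}\,\pi_E^{-k}\,(\mu_{22} - 2\mu_{21})}{\det(\lambda_{ij})}.
\]
Applying $\mathrm{ord}_E$ and using $\mathrm{ord}_E(A) = \mathrm{ord}_E(\beta - \alpha) - k$ then forces $\mathrm{ord}_E(\mu_{21}) + \mathrm{ord}_E(\mu_{22} - 2\mu_{21}) = 0$, which yields $\mathrm{ord}_E(\mu_{21}) = 0$.

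The principal obstacle is the decoupling step in paragraph two: extracting clean information about $c_{21} - A$ alone from the joint congruence in which $(c_{22} - B) x_2$ also participates. A naive annihilator argument is not sufficient because the hypothesis directly bounds only the annihilator of $x_1$; one must genuinely exploit that $\{[s\mathfrak{Q}_1], [u\mathfrak{Q}_1 + v\mathfrak{L}_1]\}$ is a basis of the relevant piece of $A_{K_{{n}}^{{\rm c}}}$ and that (CG) places $x_1$ and $x_2$ in the two independent summands of (\ref{direct sum decomposition}), so that the image of $x_1$ modulo $\langle x_2 \rangle_{\Lambda_E}$ still has order $\mathrm{ord}([s\mathfrak{Q}_1])$.
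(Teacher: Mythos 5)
Your proposal is correct and follows essentially the same route as the paper: both compare the exact expression for $Sx_2$ from Lemma~\ref{main lem} with the computed relation $\overline{S}([u\mathfrak{Q}_1+v\mathfrak{L}_1])=A[s\mathfrak{Q}_1]+B[u\mathfrak{Q}_1+v\mathfrak{L}_1]$, use the hypothesis $\mathrm{ord}_E(A)<\mathrm{ord}([s\mathfrak{Q}_1])$ to identify $\mathrm{ord}_E$ of the $x_1$-coefficient with $\mathrm{ord}_E(A)$, and then read off the valuations from the explicit formulas (your decoupling paragraph merely makes explicit what the paper compresses into its one-line congruence modulo $\mathrm{ord}([s\mathfrak{Q}_1])$). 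In case (b) your factor $\mu_{22}-2\mu_{21}$ and the paper's $\lambda_{21}+\lambda_{22}\pi_E^{k}=\mu_{22}$ differ by a sign slip in handling $\gamma=(\alpha-\beta)\pi_E^{-k}$, but since both arguments use only that this factor has nonnegative valuation, the conclusion $\mathrm{ord}_E(\mu_{21})=0$ is unaffected.
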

\begin{proof}
(a) Note that $\lambda_{11} \lambda_{22}-\lambda_{12} \lambda_{21} \in \mathcal{O}_E^{\times}.$
If $k=0$, then by Lemma \ref{main lem}, we obtain
\[
Sx_2 =
\frac{(\alpha-\beta) \lambda_{21}\lambda_{22}}{\det(\lambda_{ij})_{ij}}
x_1 +
\frac{-\alpha \lambda_{12}\lambda_{21}+\beta \lambda_{11}\lambda_{22} }{\det(\lambda_{ij})_{ij}}
x_2.
\]
Comparing (\ref{overline{S} ([ufrak{Q}_1+vfrak{L}_1])}) with this,
we obtain
\begin{eqnarray*}
{\rm ord}_E(A) 
\equiv
{\rm ord}_E((\alpha-\beta) \lambda_{21}\lambda_{22})
\ \ \ \ 
\text{mod $\mathrm{ord}([s\frak{Q}_{1}])$}.
\end{eqnarray*}
The claim follows from this and $\lambda_{ij}=\mu_{ij}$ $(i,j=1,2)$.
\\
(b) 
Similarly, if $k>0$, then by Lemma \ref{main lem} and (\ref{overline{S} ([ufrak{Q}_1+vfrak{L}_1])}),
we obtain
\begin{eqnarray*}
{\rm ord}_E(A) 
\equiv
{\rm ord}_E(\lambda_{21})+{\rm ord}_E((\alpha-\beta)  \lambda_{22} -\lambda_{21} (\beta-\alpha)\pi_E^{-k})
\ \ \ 
\text{mod $\mathrm{ord}([s\frak{Q}_{1}])$}.
\end{eqnarray*}
The right hand side of this congruence becomes
\begin{eqnarray*}
&&
{\rm ord}_E(\lambda_{21})+{\rm ord}_E((\alpha-\beta)  \lambda_{22} -\lambda_{21} (\beta-\alpha)\pi_E^{-k})
\\
&=&
{\rm ord}_E(\lambda_{21})+{\rm ord}_E(\alpha-\beta)-k+{\rm ord}_E(\lambda_{21}+\lambda_{22}\pi_E^k)
\\
& \ge &
{\rm ord}_E(\lambda_{21})+{\rm ord}_E(\alpha-\beta)-k.
\end{eqnarray*}
The claim follows from this and $\lambda_{21}=\mu_{21}$.
\end{proof}
\begin{Rem}
\rm
We note that it is enough to calculate Galois actions on ideal classes
by the method above 
in the case of ${n}=\mathrm{ord}_E(\beta - \alpha)$.
Indeed, we have
\[
A_{K_n^{{\rm c}}} \cong \mathbb{Z}/p^{n_1 + n}\mathbb{Z} \oplus \mathbb{Z}/p^{n_2 + n}\mathbb{Z}
\]
for $n \geq 0$ by \cite[Proposition $2.2$]{Ko}.
Hence, if ${n}=\mathrm{ord}_E(\beta - \alpha)$, then $\mathrm{ord}([s\frak{Q}_{1}])=n_1+n>\mathrm{ord}_E(\beta - \alpha)$.
\end{Rem}
\subsection{Examples of Theorem \ref{main thm of classification lambda=2}}
\begin{Example}\label{example of 1.2(i)}
\begin{rm}
Let $p=3$ and $K=\mathbb{Q}(\sqrt{-12394})$.
Using PARI/GP, we have $A_K \cong \mathbb{Z}/9 \mathbb{Z} \oplus \mathbb{Z}/3\mathbb{Z}$.
By Lemma \ref{Fujii}, we have $L_{K} \cap \widetilde{K}=K_{2}^{{\rm an}}$.
Indeed, we have
$(I(3)/S(3^5)) \otimes \mathbb{Z}_3 \cong \mathbb{Z}/3\mathbb{Z}
\oplus \mathbb{Z}/3^{4}\mathbb{Z} \oplus \mathbb{Z}/3^{6} \mathbb{Z}.$
Hence we  get
$\mathrm{Gal}(L_K/L_K \cap \widetilde{K}) \cong \mathbb{Z}/3 \mathbb{Z}$.
This implies that  $L_{K} \cap \widetilde{K}=K_{2}^{{\rm an}}$.
Moreover, using \cite[Theorem $2$]{Br}, we obtain
\begin{eqnarray*}\label{18th}
S^{18} + 18S^{16} + 1069S^{14} - 4372S^{12} + 152180S^{10} - 1347136S^{8} +\\ \nonumber
2053184S^{6} + 36414976S^{4} - 166023168S^{2} + 203063296
\end{eqnarray*}
as a defining polynomial of $K_{2}^{{\rm an}}$ over $\mathbb{Q}$.
By PARI/GP, we have 
\[
f(S)\equiv 
S^2 + 90 S + 189~~\mathrm{mod}~3^5.
\]
Let $E$ be the minimal splitting field of $f(S)$.
We put $f(S)=(S-\alpha)(S-\beta)$, where $\alpha$ and $\beta \in E$.
We can check that $E/\mathbb{Q}_p$ is an ramified extension and we get
$\mathrm{ord}_E(\alpha-\beta)=3.$
By the table in \cite{Ko}, we obtain
\[
X_{K_\infty^{{\rm c}}} \otimes_{\mathbb{Z}_p} \mathcal{O}_E \cong \langle 
(1,1), (0, \pi_{E}^2) \rangle_{\mathcal{O}_E},
\]
which implies that $k=2$ in Theorem \ref{main thm of classification lambda=2}.
Since we have $\mathrm{ord}_E(\alpha)=\mathrm{ord}_E(\beta)=3$,
we obtain $\mathrm{ord}_E(\alpha-\beta)-k=1<3$.
Therefore $X_{\widetilde{K}}$ is cyclic as $\mathbb{Z}_p[[\mathrm{Gal}(\widetilde{K}/K)]]$-modules
by Theorem \ref{main thm of classification lambda=2} (i).
\end{rm}
\end{Example}
We can also obtain the same result as above by the following 
\begin{prop}\label{test}
We use the same notation as above.
Suppose the following conditions:\par
$\mathrm{(i)}$ $\mathrm{ord}_E(\alpha)=\mathrm{ord}_E(\beta),$\par
$\mathrm{(ii)}$ $A_K \cong \mathbb{Z}/p^{m_1} \mathbb{Z} \oplus \mathbb{Z}/p^{m_2} \mathbb{Z}$ ~~$(m_1 < m_2)$,\par
$\mathrm{(iii)}$ $L_{K} \cap \widetilde{K}=K_{m_2}^{{\rm an}}$.\\
Then $X_{\widetilde{K}}$ is cyclic as $\mathbb{Z}_p[[\mathrm{Gal}(\widetilde{K}/K)]]$-modules.
\end{prop}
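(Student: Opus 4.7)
The plan is to show that the hypotheses of Theorem \ref{main thm of classification lambda=2} are met and that case (i) of its criterion applies, so that the subtle data $\mu_{21},\mu_{22}$ are not needed. The direct-summand hypothesis holds because $L_K\cap\widetilde{K}=K_{m_2}^{{\rm an}}$ makes $\mathrm{Gal}(L_K\cap\widetilde{K}/K)$ cyclic of order $p^{m_2}=\mathrm{exp}(A_K)$, and a cyclic subgroup of a finite abelian $p$-group realising the exponent is always a direct summand. In the notation of Theorem \ref{main thm of classification lambda=2} this yields $n_1=m_2\geq m_1=n_2$.

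Next I would pin down the invariant $k$ by computing $M(k)/SM(k)$ for the model $M(k)=\langle(1,1),(0,\pi_E^k)\rangle_{\mathcal{O}_E}$ from Proposition \ref{Sumida re}. With the $S$-action recalled in \S\ref{Sumida's and Koike's results}, this quotient is presented by the matrix
\[
\begin{pmatrix}\alpha & (\beta-\alpha)\pi_E^{-k}\\ 0 & \beta\end{pmatrix},
\]
whose Smith normal form, using $\mathrm{ord}_E(\alpha)=\mathrm{ord}_E(\beta)=m$, has invariant factors with valuations $a=\min\{m,\mathrm{ord}_E(\beta-\alpha)-k\}$ and $b=2m-a$. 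On the other hand $M(k)/SM(k)\cong A_K\otimes_{\mathbb{Z}_p}\mathcal{O}_E\cong\mathcal{O}_E/\pi_E^{em_1}\oplus\mathcal{O}_E/\pi_E^{em_2}$, so $\{a,b\}=\{em_1,em_2\}$. Since $m_1<m_2$ we have $a\neq b$, which rules out the branch where the minimum is attained by $m$ (otherwise $a=b=m$); hence $\mathrm{ord}_E(\beta-\alpha)-k=em_1<m$ and $2m=e(m_1+m_2)$.

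Finally I would confirm $k>0$: the ultrametric inequality gives $\mathrm{ord}_E(\beta-\alpha)\geq m$, while $2m=e(m_1+m_2)>2em_1$ yields $m>em_1$, so $k=\mathrm{ord}_E(\beta-\alpha)-em_1\geq m-em_1>0$. Together with $\mathrm{ord}_E(\beta-\alpha)-k=em_1<m$ this is exactly case (i) of Theorem \ref{main thm of classification lambda=2}, and the $\mathbb{Z}_p[[\mathrm{Gal}(\widetilde{K}/K)]]$-cyclicity of $X_{\widetilde{K}}$ follows. The only real obstacle is identifying the correct value of $k$ via the Smith normal form and seeing that the strict inequality $m_1<m_2$ simultaneously forces $a\neq b$ and $k>0$; once these are in hand, none of the conditions involving $\mu_{21}$ or $\mu_{22}$ needs to be checked.
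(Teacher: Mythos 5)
Your proposal is correct, and its logical skeleton coincides with the paper's proof: condition (iii) supplies the direct-summand hypothesis because $\mathrm{Gal}(L_K\cap\widetilde{K}/K)$ is cyclic of order $p^{m_2}=\exp(A_K)$, conditions (i) and (ii) force $k>0$ and $\mathrm{ord}_E(\beta-\alpha)-k<m$, and then case (i) of Theorem \ref{main thm of classification lambda=2} applies with no information on $\mu_{21},\mu_{22}$ needed. The one genuine difference is how the middle step is handled. The paper disposes of it in one line by quoting \cite[Lemma 5.2]{MOMO}, which says $A_K\otimes_{\mathbb{Z}_p}\mathcal{O}_E\cong\mathcal{O}_E/\alpha\oplus\mathcal{O}_E/\beta$ if $\mathrm{ord}_E(\beta-\alpha)-k\geq m$, and $\mathcal{O}_E/(\beta-\alpha)\pi_E^{-k}\oplus\mathcal{O}_E/\frac{\alpha\beta}{(\beta-\alpha)\pi_E^{-k}}$ otherwise; under (i) the first branch has two summands of equal length, contradicting $m_1<m_2$, and $k=0$ would land in that branch since $\mathrm{ord}_E(\beta-\alpha)\geq m$. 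You instead re-derive exactly this dichotomy from scratch: your Smith-normal-form computation of the cokernel of $S$ on $M(k)$, with invariant factors of valuations $a=\min\{m,\mathrm{ord}_E(\beta-\alpha)-k\}$ and $2m-a$, is precisely the content of the cited lemma in the case at hand, and it is legitimate because $M(k)/SM(k)\cong(X_{K_\infty^{\rm c}}/SX_{K_\infty^{\rm c}})\otimes_{\mathbb{Z}_p}\mathcal{O}_E\cong A_K\otimes_{\mathbb{Z}_p}\mathcal{O}_E$ via $\psi_0$ (this is where non-splitness of $p$ enters). So your route buys self-containedness, proving the needed special case of \cite[Lemma 5.2]{MOMO} inline, at the cost of length; the numerics then agree, e.g.\ your $k=\mathrm{ord}_E(\beta-\alpha)-em_1\geq m-em_1>0$ from $2m=e(m_1+m_2)$ recovers the paper's $k>0$. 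One wording nit: $\mathrm{Gal}(L_K\cap\widetilde{K}/K)$ is naturally a \emph{quotient} of $\mathrm{Gal}(L_K/K)$, not a subgroup; the fact you actually need, that a cyclic quotient of order equal to the exponent splits off, follows from the subgroup statement you invoke by Pontryagin duality, so the argument stands as written.
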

\begin{proof}
Using \cite[Lemma 5.2]{MOMO}, we have
\begin{eqnarray*}
A_K \otimes_{\mathbb{Z}_p} \mathcal{O}_E \cong
\begin{cases}
\mathcal{O}_E/\alpha \mathcal{O}_E \oplus \mathcal{O}_E/\beta \mathcal{O}_E
&\mathrm{~if~} \mathrm{ord}_E(\beta - \alpha) -k \geq m, \\
\mathcal{O}_E/(\beta - \alpha)\pi_E^{-k} \mathcal{O}_E \oplus \mathcal{O}_E/\frac{\alpha \beta}{(\beta - \alpha)\pi_E^{-k}} \mathcal{O}_E.
&\mathrm{~if~} \mathrm{ord}_E(\beta - \alpha) -k < m,
\end{cases}
\end{eqnarray*}
where $m={\rm min}\{{\rm ord}_E(\alpha), {\rm ord}_E(\beta)\}$.
This implies that $k>0$ by assumptions (i) and (ii).
Hence we have $\mathrm{ord}_E(\beta - \alpha) -k < m$.
Moreover, ${\rm Gal}(L_K \cap \widetilde{K}/K)$ is a direct summand of ${\rm Gal}(L_K/K)$ by (iii).
By (i) in Theorem \ref{main thm of classification lambda=2}, we get the conclusion.
\end{proof}
By Proposition \ref{test}, we obtain the Tables 1 and 2.
\begin{table}[htb]
\caption{}
\begin{tabular}{|c||c|c|c|c|c|c|c|c|} \hline
$d$&
\scriptsize{$\mathrm{ord}_E(\alpha-\beta)$}&
$k$&
$m$&
\scriptsize{$L_{K} \ \cap \widetilde{K}$}&
$E/\mathbb{Q}_3$&
$A_{0}$&
$X_{\widetilde{K}}$\\
\hline \hline
5703&
3&
2&
3&
$K_{2}^{{\rm an}}$&
\footnotesize{ramified}&
$(9,3)$&
cyclic\\
12394&
3&
2&
3&
$K_{2}^{{\rm an}}$&
\footnotesize{ramified}&
$(9,3)$&
cyclic\\
50293&
3&
2&
3&
$K_{2}^{{\rm an}}$&
\footnotesize{ramified}&
$(9,3)$&
cyclic\\
54931&
3&
2&
3&
$K_{2}^{{\rm an}}$&
\footnotesize{ramified}&
$(9,3)$&
cyclic\\
89269&
3&
2&
2&
${K_3}^{{ \rm an}}$&
\footnotesize{unramified}&
$(27,3)$&
cyclic\\
\hline
\end{tabular}
\\[2pt]
(The integer $k$ is defined by (\ref{definition of k}) and $m={\rm min}\{{\rm ord}_E(\alpha), {\rm ord}_E(\beta)\}$)
\end{table}
\begin{table}[htb]
\caption{}
\begin{tabular}{|c||c|} \hline
$d$&
a generator of $ \mathrm{char}(X_{K_\infty^{{\rm c}}})~\mathrm{~mod~}3^5$ \\
\hline \hline
5703&
$S^2 + 63S + 135$\\
12394&
$S^2 + 63S + 27$\\
50293&
$S^2 + 54S + 189$\\
54931&
$S^2 + 135S + 216$\\
89269&
$S^2 + 63S + 81$\\
\hline
\end{tabular}
\end{table}

\vspace{10pt}

Next, using (iv) in Theorem \ref{main thm of classification lambda=2},
we obtain the following example that $X_{\widetilde{K}}$ is not cyclic as $\mathbb{Z}_p[[\mathrm{Gal}(\widetilde{K}/K)]]$-modules.
\begin{Example}\label{example of 1.2(iii)(iv)}
\begin{rm}
Let $p=3$ and $K=\mathbb{Q}(\sqrt{-42619})$.
Using PARI/GP, we have $A_K \cong \mathbb{Z}/3\mathbb{Z} \oplus \mathbb{Z}/3\mathbb{Z}$.
We have $L_{K} \cap \widetilde{K}=K_{1}^{{\rm an}}$.
Hence  ${\rm Gal}(L_K \cap \widetilde{K}/K)$ is a direct summand of ${\rm Gal}(L_K/K)$.
We get 
\begin{eqnarray*}
f(S)  \equiv 
S^2 + 186 S + 630~~\mathrm{mod}~3^6.
\end{eqnarray*}
By Hensel's Lemma, there exist $\alpha, \beta \in \mathbb{Z}_p$ such that
$f(S)=(S-\alpha)(S-\beta)$, where $\alpha \equiv 105~~\mathrm{mod}~3^5$ and $\beta \equiv 51~~\mathrm{mod}~3^5$.
Hence we have $\mathrm{ord}_p(\alpha-\beta)=3.$
In this case, although \cite{Ko} could not determine the isomorphism class of $X_{K_\infty^{{\rm c}}}$,
we can determine it using Fitting ideals as follows.
We compute
\[
A_{K_1^{{\rm c}}} = \mathbb{Z}/9  \mathbb{Z}
~ [\frak{b}_1]
\oplus \mathbb{Z}/9 \mathbb{Z}
~ [\frak{b}_2]
\]
for some ideals $\frak{b}_1$ and $\frak{b}_2$ in $\mathcal{O}_{K_1^{{\rm c}} }$.
Take a generator $\overline{\sigma}$ of $\mathrm{Gal}(K_1^{{\rm c}}/K)$.
These $\frak{b}_1$, $\frak{b}_2$, and $\overline{\sigma}$ are computed by PARI/GP.
We do not write down $\overline{\sigma}$ because it is complicated.
There is a topological generator ${\sigma} \in \mathrm{Gal}(K_\infty^{{\rm c}}/K)$
such that ${\sigma}$ is an extension of $\overline{\sigma}$.
By this topological generator, we have the isomorphism (\ref{isom ring}).
We regard $X_{K_\infty^{{\rm c}}}$ as a $\mathbb{Z}_p[[S]]$-module by this isomorphism.
We note that
$f(S)$ depends on the choice of ${\sigma}$, but we can easily
check that $\mathrm{ord}_p(\alpha)$, $\mathrm{ord}_p(\beta)$, and $\mathcal{M}_{f(S)}^{\mathbb{Q}_p}$ do not depend on the choice
of ${\sigma}$.
We also compute that
\[
\overline{\sigma} [\frak{b}_1] = 4 [\frak{b}_1],\quad
\overline{\sigma} [\frak{b}_2] = 4[\frak{b}_2].
\]
Hence we have
\[
\mathrm{Fitt}_{1,\mathbb{Z}_p[[S]]}(X_{K_\infty^{{\rm c}}}/ \omega_1(S) X_{K_\infty^{{\rm c}}} )=(S-3).
\]
Using Lemma \ref{kuri lem} and 
Corollary \ref{kuri lem rem},
we obtain $k=0$ in Theorem \ref{main thm of classification lambda=2}, which implies that
\begin{eqnarray*}
X_{K_\infty^{{\rm c}}}  &\cong & \langle (1,0), (0, 1) \rangle\\
                                     &=     & \Lambda/(S-\alpha ) \oplus \Lambda/(S-\beta ).
\end{eqnarray*}
Furthermore, we have $\mathrm{ord}_p(\alpha-\beta)-k=3> \mathrm{min} \{\mathrm{ord}_p(\alpha), \mathrm{ord}_p(\beta) \}=1$.
Therefore $X_{\widetilde{K}}$ is not cyclic as $\mathbb{Z}_p[[\mathrm{Gal}(\widetilde{K}/K)]]$-modules
by Theorem \ref{main thm of classification lambda=2} (iv).
\end{rm}
\end{Example}
By the same methods as in Examples \ref{example of 1.2(iii)(iv)} for $p=3$, we obtain Tables 3 and 4.
\begin{table}[htb]
\caption{}
\begin{tabular}{|c||c|c|c|c|c|c|c|c|} \hline
$d$&
\scriptsize{$\mathrm{ord}_E(\alpha-\beta)$}&
$k$&
$m$&
\scriptsize{$L_{K} \ \cap \widetilde{K}$}&
$E/\mathbb{Q}_3$&
$A_{0}$&
$X_{\widetilde{K}}$\\
\hline \hline
32137&
2&
0&
1&
$K_{1}^{{\rm an}}$&
\footnotesize{unramified}&
$(3,3)$&
non-cyclic\\
34989&
5&
1&
2&
$K_{1}^{{\rm an}}$&
\footnotesize{ramified}&
$(3,3)$&
non-cyclic\\
42619&
3&
0&
1&
$K_{1}^{{\rm an}}$&
\footnotesize{$E=\mathbb{Q}_p$}&
$(3,3)$&
non-cyclic\\
\hline
\end{tabular}
\\[2pt]
(The integer $k$ is defined by (\ref{definition of k}) and $m={\rm min}\{{\rm ord}_E(\alpha), {\rm ord}_E(\beta)\}$)
\end{table}
\begin{table}[htb]
\caption{}
\begin{tabular}{|c||c|} \hline
$d$&
a generator of $ \mathrm{char}(X_{K_\infty^{{\rm c}}})~\mathrm{~mod~}3^6$ \\
\hline \hline
32137&
$S^2 + 318S + 657$\\
34989&
$S^2 + 66 S + 117$ \\
42619&
$S^2 + 573S + 252$\\
\hline
\end{tabular}
\end{table}

\vspace{10pt}

On the other hand, using (iv) in Theorem \ref{main thm of classification lambda=2},
we obtain the following example that $X_{\widetilde{K}}$ is cyclic as $\mathbb{Z}_p[[\mathrm{Gal}(\widetilde{K}/K)]]$-modules.
\begin{Example}\label{example of 1.2(iv)}
\begin{rm}
Let $p=3$ and $K= \mathbb{Q}(\sqrt{-2437})$.
We will prove that $X_{\widetilde{K}}$ is a $\mathbb{Z}_{p}[[\mathrm{Gal}(\widetilde{K}/K)]]$-cyclic module using PARI/GP  \cite{PARI/GP}.
In this case we have
$\mathrm{Cl}_{K} \cong \mathbb{Z}/6\mathbb{Z} \oplus \mathbb{Z}/3 \mathbb{Z}$
and 
$\mathrm{Cl}_{K_1^{{\rm c}}} \cong \mathbb{Z}/3906 \mathbb{Z} \oplus \mathbb{Z}/9 \mathbb{Z}$.
Hence we have
$A_{K} \cong \mathbb{Z}/3\mathbb{Z} \oplus \mathbb{Z}/3 \mathbb{Z}$
and
$A_{K_1^{{\rm c}}} \cong \mathbb{Z}/9\mathbb{Z} \oplus \mathbb{Z}/9 \mathbb{Z}$.
We have 
\[
f(S)\equiv S^2+ 9 S+ 9~~\mathrm{mod}~3^3.
\]
Let $E$ be the minimal splitting field of $f(S)$.
We put $f(S)=(S-\alpha)(S-\beta)$, where $\alpha$ and $\beta \in E$.
Since the discriminant of $f(S)$ is $45$ mod $3^3$,
$E/\mathbb{Q}_p$ is an unramified extension and we get
$\mathrm{ord}_E(\alpha-\beta)=1.$
By the table in \cite{Ko}, we obtain
\[
X_{K_\infty^{{\rm c}}} \otimes_{\mathbb{Z}_p} \mathcal{O}_E \cong \langle (1,0), (0, 1) \rangle_{\mathcal{O}_E},
\]
which implies that $k=0$ in Theorem \ref{main thm of classification lambda=2}.
By Lemma \ref{Fujii}, we have $L_{K} \cap \widetilde{K}=K_{1}^{{\rm an}}$.
Indeed, we have
$(I(3)/S(3^4)) \otimes \mathbb{Z}_3 \cong \mathbb{Z}/3\mathbb{Z}
\oplus \mathbb{Z}/3^{3}\mathbb{Z} \oplus \mathbb{Z}/3^{4} \mathbb{Z}.$
Hence  ${\rm Gal}(L_K \cap \widetilde{K}/K)$ is a direct summand of ${\rm Gal}(L_K/K)$.
Using \cite[Theorem $2$]{Br}, we obtain
\begin{eqnarray*}
x^6 - 20 x^4 + 100 x^2 + 38992
\end{eqnarray*}
as a defining polynomial of $K_{1}^{{\rm an}}$ over $\mathbb{Q}$.
We can check that both $53$ and $251$ are primes 
which split completely in $K_{1}^{{\rm c}}/\mathbb{Q}$.
We put
\begin{eqnarray*}
\frak{q} &=& (251,\; -18+\sqrt{-2437}),\\
\frak{l} &=& (53,\; -1+ \sqrt{-2437}),
\end{eqnarray*}
which are prime ideals in $K$ lying above $251, 53$, respectively.
Using PARI/GP, we compute prime ideals 
$\frak{Q}_{i}, \overline{\frak{Q}_i}, \frak{L}_i$, $\overline{\frak{L}_{i}}$
in $\mathcal{O}_{K_1^{{\rm c}}}$
($i=1,2,3$)
which satisfy
\begin{eqnarray*}
251 \mathcal{O}_{K_1^{{\rm c}}} &=& \frak{Q}_1 \overline{\frak{Q}_1} \cdots \frak{Q}_3 \overline{\frak{Q}_3},\\
53  \mathcal{O}_{K_1^{{\rm c}}} &=& \frak{L}_1 \overline{\frak{L}_1} \cdots \frak{L}_3 \overline{\frak{L}_3}
\end{eqnarray*}
and 
$\frak{Q}_{i} ~|~ \frak{q}$, 
$\overline{\frak{Q}_i} ~|~ \overline{\frak{q}}$, 
$\frak{L}_{i} ~| ~\frak{l}$,
$\overline{\frak{L}_i} ~|~ \overline{\frak{l}}$
for $i=1,2,3.$
We also compute 
$$
\mathrm{Cl}_{K_1^{{\rm c}}} = \mathbb{Z}/(434 \cdot 9)\mathbb{Z}~ [\frak{c}_1] \oplus \mathbb{Z}/9\mathbb{Z}~ [\frak{c}_2]
$$
for some ideals $\frak{c}_1$ and $\frak{c}_2$ in $\mathcal{O}_{K_1^{{\rm c}}}$,
which was computed by PARI/GP.
Pick one of $\frak{Q}_{i} ~|~ \frak{q}$ (resp. $\frak{L}_{i} ~| ~\frak{l}$) and we may assume that it is $\frak{Q}_{1}$ (resp. $\frak{L}_{1}$).
As in \S \ref{A method of computing mu_{21} and mu_{22}}, we take a basis $\{[434\frak{Q}_1], [434\frak{L}_1] \}$ of $A_{K_1^{{\rm c}}}$, in other words,
\[
A_{K_1^{{\rm c}}} = \mathbb{Z}/9 \mathbb{Z} ~[434\frak{Q}_1] \oplus \mathbb{Z}/9 \mathbb{Z} ~[434\frak{L}_1].
\]
This implies that both $s$ and $t$ in \S \ref{A method of computing mu_{21} and mu_{22}} are $434$.

Now, to obtain a representation as (\ref{overline{S} ([ufrak{Q}_1+vfrak{L}_1])}),
we consider the Galois action of $\mathrm{Gal}(K_1^{{\rm c}}/K)$ to $[\frak{Q}_1]$ and $[\frak{L}_1]$.
Write $[\frak{Q}_1]$ and $[\frak{L}_1]$ as linear forms of $[\frak{c}_1]$ and $[\frak{c}_2]$:
\[
[\frak{Q}_1] = 3229 [\frak{c}_1] + 6[\frak{c}_2],\quad
[\frak{L}_1] = 2580 [\frak{c}_1] + 7[\frak{c}_2].
\]
On the other hand, we can compute 
$$
\mathrm{Gal}(L_{K}/K_1^{{\rm an}})
=
\left\langle \left( \frac{L_K/K}{\frak{q}} \right) \cdot \left( \frac{L_K/K}{\frak{l}} \right) \right\rangle
=
\left\langle \left( \frac{L_K/K}{\frak{q}} \right)^{434} \cdot \left( \frac{L_K/K}{\frak{l}} \right)^{434} \right\rangle,
$$
in other words, both $u$ and $v$ in \S \ref{A method of computing mu_{21} and mu_{22}} are $434$.
Let
$\overline{\sigma}$ be a generator of $\mathrm{Gal}(K_1^{{\rm c}}/K)$,
which was computed by PARI/GP.
We do not write down $\overline{\sigma}$ because it is complicated.
Then, by computation of $\overline{\sigma}[\frak{c}_1]$ and $\overline{\sigma}[\frak{c}_2]$, we can write $\overline{\sigma} [\frak{Q}_1]$ and $\overline{\sigma} [\frak{L}_1]$ as linear forms of $[\frak{c}_1]$ and $[\frak{c}_2]$:
\begin{eqnarray*}
\overline{\sigma} [\mathfrak{Q}_1]&=& 1327 [\mathfrak{c}_1] + 3 [\mathfrak{c}_2],  \\
\overline{\sigma} [\mathfrak{L}_1]&=& 624 [\mathfrak{c}_1] +  [\mathfrak{c}_2]. 
\end{eqnarray*}
Take a topological generator ${\sigma} \in \mathrm{Gal}(K_\infty^{{\rm c}}/K)$ such that ${\sigma}$ is an extension of $\overline{\sigma}$.
Then in the same way as Example \ref{example of 1.2(iii)(iv)}, we regard $X_{K_\infty^{{\rm c}}}$ as a $\Lambda$-module 
by the isomorphism (\ref{isom ring}).
We note that
$E$, $\mathrm{ord}_E(\alpha)$, $\mathrm{ord}_E(\beta)$, and $\mathcal{M}_{f(S)}^E$ do not depend on the choice
of ${\sigma}$.
Since $\mathbb{Z}_p[\mathrm{Gal}(K_1^{{\rm c}}/K)] \cong \Lambda/\omega_1(S) \Lambda$, we get
\begin{eqnarray*}
\overline{S}[\mathfrak{Q}_1]&=&
- \frac{5574}{7123} [\mathfrak{Q}_1]
+ \frac{1725}{7123}[\mathfrak{L}_1],  \\
\overline{S}[\mathfrak{L}_1]&=& 
\frac{1788}{7123} [\mathfrak{Q}_1] 
- \frac{7638}{7123}  [\mathfrak{L}_1], 
\end{eqnarray*}
where $\overline{S}=S$ mod $\omega_1(S)$.
Using the commutative diagram before Theorem \ref{main thm}, 
we can take $x_1, x_2 \in X_{K_\infty^{{\rm c}}}$
such that
\[
\psi_{1}(x_1 \mathrm{~mod~} \omega_{1}(S)) = [434\frak{Q}_{1}],\quad
\psi_{1}(x_2 \mathrm{~mod~} \omega_{1}(S)) = [434\frak{Q}_1+434\frak{L}_1].
\]
These implies that (\ref{overline{S} ([ufrak{Q}_1+vfrak{L}_1])}) becomes
\[
Sx_2 \mathrm{~mod~} \omega_1(S) = -\frac{923118}{7123} x_1 - \frac{1643124}{7123} x_2 \mathrm{~mod~} \omega_1(S).
\]
We note that $\displaystyle{\mathrm{ord}_E\left(\frac{923118}{7123} \right)=1, \mathrm{ord}_E\left(\frac{1643124}{7123}\right)=1}$.
By Theorem \ref{main thm}, we obtain $\mathrm{ord}_E(\mu_{21})$ $=\mathrm{ord}_E(\mu_{22})$ $=0$.
Therefore $X_{\widetilde{K}}$ is a cyclic $\mathbb{Z}_p[[\mathrm{Gal}(\widetilde{K}/K)]]$-module
by Theorem \ref{main thm of classification lambda=2} (iv).
\end{rm}
\end{Example}
By the same method as in Example \ref{example of 1.2(iv)}, we obtain Tables 5 and 6.
\begin{table}[htb]
\caption{}
\begin{tabular}{|c||c|c|c|c|c|c|c|c|} \hline
$d$&
\scriptsize{$\mathrm{ord}_E(\alpha-\beta)$}&
$k$&
$m$&
\scriptsize{$L_{K} \ \cap \widetilde{K}$}&
$E/\mathbb{Q}_3$&
$A_{0}$&
$X_{\widetilde{K}}$\\
\hline \hline
2437&
1&
0&
1&
$K_{1}^{{\rm an}}$&
\footnotesize{unramified}&
$(3,3)$&
cyclic\\
3886&
1&
0&
1&
$K_{1}^{{\rm an}}$&
\footnotesize{$E=\mathbb{Q}_p$}&
$(3,3)$&
cyclic\\
4027&
1&
0&
1&
$K_{1}^{{\rm an}}$&
\footnotesize{$E=\mathbb{Q}_p$}&
$(3,3)$&
cyclic\\
7977&
1&
0&
1&
$K_{1}^{{\rm an}}$&
\footnotesize{unramified}&
$(3,3)$&
cyclic\\
\hline
\end{tabular}
\\[2pt]
(The integer $k$ is defined by (\ref{definition of k}) and $m={\rm min}\{{\rm ord}_E(\alpha), {\rm ord}_E(\beta)\}$)
\end{table}
\begin{table}[htb]
\caption{}
\begin{tabular}{|c||c|} \hline
$d$&
Defining polynomial of $K_{1}^{{\rm an}}$\\
\hline \hline
2437&
$x^6 - 20 x^4 + 100 x^2 + 38992$\\
3886&
$x^6 - 66x^4 + 1089x^2 + 62176$\\
4027&
$x^6 - 44x^4 + 484x^2 + 4027$\\
7977&
$x^6 - 2x^5 - 53x^4 + 126x^3 + 8634x^2 - 1944x + 1296$\\
\hline
\end{tabular}
\end{table}

\vspace{10pt}

{\bf  Acknowledgement.}
The authors would like to express their sincere gratitude to Professor Masato Kurihara.
They started a series of studies progressed in this paper and the previous one \cite{MOMO}, 
drawing their inspiration from his brilliant idea in \cite{Kurihara11} 
appearing in the argument of reducing the refined class number formula to the Gross' conjecture.
He always encouraged the authors, gave helpful suggestions, and kindly answered many questions.
%
The authors also would like to express their thanks to Professor Satoshi Fujii for his useful comments, 
to Professor Takashi Fukuda for introducing to the authors useful functions in PARI/GP \cite{PARI/GP}, one of which computes the characteristic polynomials of Iwasawa modules.


\vspace*{10pt}
\noindent
Takashi MIURA,
\\
Department of Creative Engineering,
National Institute of Technology, Tsuruoka College,
\ 
104 Sawada, Inooka, Tsuruoka, Yamagata 997-8511, Japan.
\\
{\tt 
t-miura@tsuruoka-nct.ac.jp
}
\\[10pt]
Kazuaki MURAKAMI,
\\
Department of Mathematical Sciences, Graduate School of Science and Engineering, Keio University,
\ 
Hiyoshi, Kohoku-ku, Yokohama, Kanagawa 223-8522, Japan.
\\
{\tt 
murakami\_0410@z5.keio.jp
}
\\[10pt]
Keiji OKANO,
\\
Department of Teacher Education,
\ 
3-8-1 Tahara, Tsuru-shi, Yamanashi 402-0054, Japan.
\\
{\tt 
okano@tsuru.ac.jp
}
\\[10pt]
Rei OTSUKI,
\\
Department of Mathematics,
Keio University,
\ 
3-14-1 Hiyoshi, Kouhoku-ku, Yokohama 223-8522, Japan.
\\
{\tt 
ray\_otsuki@math.keio.ac.jp
}

\end{document}